\newcommand{\pr}{\mathbb{P}}
\newcommand{\expec}{\mathbb{E}}
\newcommand{\ind}{\mathbf{1}}
\title{\normalsize\bfseries
	ON THE SLOW PHASE FOR FIXED-ENERGY ACTIVATED RANDOM WALKS
}
\author{
	Bernardo N. B. de Lima\footnote{Departamento de Matemática,
		Universidade Federal de Minas Gerais, Belo Horizonte, Brazil.
		Email: bernardonblima@gmail.com}
	\and
	Leonardo T. Rolla\footnote{Instituto de Matemática e Estatística,
		Universidade de São Paulo, São Paulo, Brazil.
		Email: leonardo.rolla@gmail.com}
	\and
	Célio Terra\footnote{Departamento de Matemática, Universidade Federal de Minas Gerais, Brazil.
		Current address: Instituto de Matemática, Universidade Federal do Rio de Janeiro, Brazil.
		Email: caugusto.terra@gmail.com}
}
\date{}
\begin{document}
	
	\maketitle
	
	\vspace{-1em}
	
	\begin{center}
		\begin{minipage}{13cm}
			\small
			\textbf{Abstract.}
			We study the Activated Random Walk model on the one-dimensional ring, in the high density regime. We develop a toppling procedure that gradually builds an environment that can be used to show that activity will be sustained for a long time. This yields a self-contained and relatively short proof of existence of a slow phase for arbitrarily large sleep rates.
			
			\medskip
			\textbf{Keywords:} Activated random walks; phase transition;
			interacting particle systems.
			
			\textbf{2020 Mathematics Subject Classification:}
			Primary 60K35; Secondary 82C26.
		\end{minipage}
	\end{center}
	
	\vspace{1em}
	
	\renewcommand{\thesection}{\arabic{section}.}
	\renewcommand{\thesubsection}{\arabic{section}.\arabic{subsection}}
	
	\makeatletter
	\renewcommand\section{\@startsection
		{section}{1}{\z@}
		{-2.5ex \@plus -0.5ex \@minus -.2ex}
		{1.2ex \@plus .2ex}
		{\normalfont\normalsize\bfseries}}
	\renewcommand\subsection{\@startsection
		{subsection}{2}{\z@}
		{-2ex \@plus -0.5ex \@minus -.2ex}
		{1ex \@plus .2ex}
		{\normalfont\normalsize\bfseries}}
	\makeatother
	
	\newtheorem{theorem}{Theorem}
	\newtheorem{proposition}{Proposition}
	\newtheorem{lemma}{Lemma}
	\newtheorem{condition}{Condition}
	\newtheorem{definition}{Definition}
	\newtheorem{remark}[theorem]{Remark}
	
	\maketitle
	
\noindent\small\textit{This preprint has the same numbering of sections,
	equations and theorems as the published version in
	\emph{J. Appl. Probab.}
	\href{https://doi.org/10.1017/jpr.2026.10083}
	{doi:10.1017/jpr.2026.10083}}
	\vspace{1em}

	\section{Introduction}
	
Activated Random Walk (ARW) is an Abelian model of interacting particles that can be described as follows. On the one-dimensional ring $\mathbb{Z}_N:=\mathbb{Z}/N\mathbb{Z}$, we take a deterministic number $\zeta N$ of particles in arbitrary initial positions. There are two types of particles, \emph{active} and \emph{sleeping}. Active particles perform independent continuous-time random walks at rate $1$, and fall asleep at rate $\lambda$. Sleeping particles do not move, and continue to sleep until an active particle reaches the same site, and then become active again. See~\cite{Rolla20} for an introduction and main results, and~\cite{AsselahForienGaudilliere22, AsselahRollaSchapira22, BristielSalez22, CabezasRolla21, Forien24, ForienGaudilliere22, HoffmanHuRicheyRizzolo22, HoffmanJohnsonJunge24, Hu22, LevineSilvestri21, PodderRolla21, Taggi23} for more recent developments.

As the ring $\mathbb{Z}_N$ is finite, it is immediate that there is fixation almost surely (a.s.) if, and only if, $\zeta \le 1$. Thus, we are interested in \emph{how much time} the system will take to stabilize. Our main result is the following theorem.

\begin{theorem} \label{slow_phase} Let
	$\mathcal{J}$ be the total number of jumps the particles do until the initial configuration is stabilized.
	For every $0< \lambda <+\infty$, there are constants $\delta, \delta '>0$, depending on $\lambda$, such that, if $\zeta$ is close enough to $1$,
	$$\pr(\mathcal{J}\ge e^{\delta N})\ge 1-e^{-\delta'N},$$
	for every $N$ sufficiently large.
\end{theorem}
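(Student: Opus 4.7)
The plan is to use the Diaconis--Fulton site-wise representation together with the Abelian property of ARW. Attach at each site $x\in\mathbb{Z}_N$ an independent i.i.d.\ stack of instructions, each being ``jump left'', ``jump right'', or ``try to sleep'' with probabilities $\tfrac1{2(1+\lambda)}$, $\tfrac1{2(1+\lambda)}$, and $\tfrac{\lambda}{1+\lambda}$, respectively. A toppling at an unstable site consumes the top instruction from its stack, and $\mathcal{J}$ equals the total number of non-sleep instructions consumed during stabilization. By the Abelian property, $\mathcal{J}$ does not depend on the order in which topplings are performed, so one may choose any convenient schedule and reveal the environment only gradually -- this is the flexibility to which the abstract's ``toppling procedure that gradually builds an environment'' refers.

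The first substantive step is a block decomposition. Partition $\mathbb{Z}_N$ into consecutive blocks of fixed length $L=L(\lambda)$ independent of $N$, and perform a \emph{local} stabilization inside each block using only the instructions of that block, postponing any jumps that would cross a block boundary. A direct finite-volume analysis should then show that, for $\zeta$ close enough to $1$, each block independently ends up in a ``primed'' state -- holding at least one particle poised to leave or to be reactivated by a passing walker -- with probability at least $p(L,\lambda,\zeta)$ that can be made arbitrarily close to $1$. Because primedness depends only on that block's own stack, the block events are independent, and a Chernoff bound yields that a fraction $\rho>0$ of blocks are primed with probability at least $1-e^{-\delta' N}$.

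Conditionally on this high-probability event, the final step is to continue the toppling procedure globally and argue that at least $e^{\delta N}$ further topplings are forced. I would release the stored particles from the primed blocks one by one, let each walk along the ring and reawaken the sleepers of other primed blocks, and track a Lyapunov-type potential -- for instance a weighted count of still-primed blocks -- showing that the resulting cascade of reactivations sustains itself for a number of ring circuits exponential in the number of primed blocks, with each circuit consuming $\Theta(N)$ jumps.

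The main obstacle is this last step. When a released particle consumes a fresh instruction in a block whose own stacks have been partly revealed, the ensuing dynamics must still be amenable to the cascade estimate. The toppling order must therefore be designed so that the information revealed in the first stage is just enough to certify primedness while leaving sufficient independence to drive the exponential cascade in the second -- this is exactly the gradual environment-building procedure announced in the abstract. Turning this heuristic into the quantitative $e^{\delta N}$ bound, rather than a weaker polynomial estimate, is the technical heart of the proof.
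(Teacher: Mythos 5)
Your setup (Diaconis--Fulton stacks, Abelian property, a block decomposition, and revealing the environment gradually along a chosen toppling order) matches the paper's framework, but the proposal stops exactly where the proof has to start. The step you flag as ``the main obstacle'' -- showing that the released particles sustain activity for $e^{\delta N}$ topplings -- is not a technical refinement of an otherwise complete argument; it is the entire content of the theorem, and no mechanism is offered for it. The difficulty is that for large $\lambda$ an active particle attempts to sleep with probability $\lambda/(1+\lambda)$ at every instruction, so a walker only stays awake while it sits on top of other particles; as it wakes sleepers, those particles move, fall asleep elsewhere, and destroy the very structure that kept the walker alive. A ``weighted count of still-primed blocks'' does not obviously decrease slowly: a single excursion can unprime many blocks, and nothing in the proposal rules out the activity dying in $O(\mathrm{poly}(N))$ steps. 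The paper's solution is a concrete combinatorial scheme you would have to invent: particles are labelled as carpet versus free (frozen/thawed), each block carries a ``hole'' whose position performs a random walk with strong leftward drift (Lemmas~\ref{lemma_positionhole} and~\ref{lemma_emission}, via the comparison variables $\tilde Y_v$), and an ``attempted emission'' either pushes a free particle to a neighbouring block or freezes it; the drift estimate makes freezing exponentially unlikely per attempt, which is what keeps excursions long and the carpet repairable.

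A second gap is the independence/iteration structure. Your Chernoff bound over blocks is fine for the initial stage (local stabilization uses only each block's own stack), but in the sustained phase the blocks interact through the emitted particles, so block events are no longer independent and a naive product bound fails. The paper handles this with the filtration $\mathcal{F}_i$, duplicated stacks $\xi^{y,L},\xi^{y,R}$ to decouple left- and right-moving traffic, a mass-balance identity, and a conditional exponential-moment estimate for the number of frozen particles per block (Proposition~\ref{oneblockestimate}), which is then summed along the blocks (Proposition~\ref{frozenE_n}). Finally, to get from ``one good sweep'' to $e^{\delta N}$ jumps one needs a renewal structure: the paper alternates modes with source and sink blocks and shows that Condition~\ref{condition1} (enough free, not-too-many frozen particles) is reproduced at the end of each mode with probability $1-e^{-cn}$, so that $e^{\delta N}$ modes succeed in a row. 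Your proposal contains no analogue of this self-reproducing condition, and without it the claim that the cascade lasts ``exponentially many circuits'' is an assertion of the theorem rather than a proof of it.
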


While the authors worked on this problem, two papers came out. In~\cite{AsselahForienGaudilliere22}, Theorem~\ref{slow_phase} is proved for the $d$-dimensional torus, $d \ge 2$, using auxiliary results and arguments from~\cite{ForienGaudilliere22}. Their proof uses a hierarchical argument: fixing a ``dormitory'' set where the particles will eventually stabilize, the model is reduced to a model of density $1$ in the dormitory, which is then divided in a series of growing ``clusters''. Stabilization of each of those clusters will affect the configuration in the other clusters, leading to a long stabilization time with high probability, even after summing over all the possible dormitory sets. A modification of this argument shows that the result is still true in the one-dimensional case.
In~\cite{HoffmanJohnsonJunge24} it is shown that the critical density for the fixed-energy model on $\mathbb{Z}_N$ and $\mathbb{Z}$ are the same. Their proof is based on the cyclical argument of~\cite{BasuGangulyHoffmanRichey19}, using a ``percolation in layers'' approach to show that, in every cycle, there will remain a fraction of active particles that will be sufficient to wake up the sleeping particles. 

Our proof uses a different approach which is self-contained and shorter.
The main strategy is to build a ``carpet'' where active particles can perform long excursions.
Unlike~\cite{HoffmanRicheyRolla23}, we do not assume a convenient initial configuration; instead, our carpet is built on the fly while the system dynamics develop.
We then divide the ring into large blocks, and prove that with high probability the system reaches certain configurations having many active particles in a certain region of the ring. The core of the argument is that, if we take the density large enough (roughly $1-C_1\exp(C_2 \lambda)$, with $C_1$ and $C_2$ constants), we can overcome the large sleeping rate by using the fact that, when a particle does not sleep, it will make a long excursion and wake up many particles.
In contrast to~\cite{BasuGangulyHoffmanRichey19}, which handles the low-$\lambda$ regime, at the end of each cycle we are left with inconvenient configurations connecting the source and sink regions, and need to make do with them.

This note is structured as follows. In Section~\ref{sectionmodel} we give a formal description of the ARW model. In Section~\ref{section_description} we describe a dynamical procedure to move particles on the ring according to the ARW rules. This procedure is constructed in such a way that certain important properties are preserved, as shown in Section~\ref{section_propertiesproof}. Theorem~\ref{slow_phase} is proved in Section~\ref{alternate_mode}, using the cycle argument. In Section~\ref{bound_frozen} we prove a bound on the exponential moment of the number of ``bad'' blocks. In Section~\ref{section_oneblock} we prove a one-block estimate and in Section~\ref{section_markov} we use bounds on the tail of excursions of random walks to estimate the drift of the sites where there will be no particles.

\section{Description of the model} \label{sectionmodel}

In this section we give a more formal description of the model. The configuration space will be $\{0, \mathfrak{s}, 1, 2, \dots\}^{\mathbb{Z}_N}$, where $\mathfrak{s}$ is a symbol used to denote one sleeping particle. We define $\mathfrak{s}+1=2$.

With each site $x$ on $\mathbb{Z}_N$ is associated a \emph{stack of instructions} $\xi_x:=(\xi_k^x)_{k \in \mathbb{N}}$ sampled independently over $x$ and $k$ according to the following distribution
\begin{equation*}
	\xi_k^x=\begin{cases}
		\mathfrak{t}_{x,x+1} & \text{with probability } \frac{1}{2(1+\lambda)}; \\
		\mathfrak{t}_{x,x-1} & \text{with probability } \frac{1}{2(1+\lambda)}; \\
		\mathfrak{t}_{x,\mathfrak{s}} & \text{with probability } \frac{\lambda}{1+\lambda},
	\end{cases}
\end{equation*}
where each of the above transition acts on a configuration $\eta \in \{0, \mathfrak{s}, 1, 2, \dots\}^{\mathbb{Z}_N}$ in the following way
\begin{equation*}
	[\mathfrak{t}_{x,y}\eta](z)=
	\begin{cases}
		\eta(z)+1, & \text{if } z=y; \\
		\eta(z) - 1, & \text{if } z=x \\
		\eta(z), & \text{otherwise,}
	\end{cases}
	\qquad
	[\mathfrak{t}_{x,\mathfrak{s} }\eta](z)=
	\begin{cases}
		\mathfrak{s}, & \text{if } z=x \text{ and } \eta(x)=1;	\\
		\eta(z), & \text{otherwise.} 
	\end{cases}
\end{equation*}

A site $x$ is called \emph{stable} in a configuration $\eta$ if $\eta(x)=0$ or $\mathfrak{s}$, and \emph{unstable} otherwise. A configuration $\eta$ is \emph{stable} if every~$x$ is stable in $\eta$. The operation of \emph{toppling} a site~$x$ consists in updating the configuration by $\eta \mapsto \xi_{(x,k)}\eta$, with $k:= \min \{j \! : \! \text{instruction } \xi_{(x,j)} \text{ has not been used} \}$. The toppling of a site $x$ is \emph{legal} if $x$ is unstable. 

A \emph{legal sequence of topplings} is a $k$-tuple $\alpha=(x_1,x_2, \dots, x_k)$ of sites where $x_1$ is unstable, $x_2$ is unstable after $x_1$ is toppled, $x_3$ is unstable after $x_1$ and $x_2$ are toppled, etc. The \emph{odometer of $x$ in $\alpha$}, denoted by $m_{\alpha}(x)$ is the number of times the site $x$ appears in $\alpha$. A legal sequence $\alpha$ \emph{stabilizes} $\eta$ if after we topple all the sites of $\alpha$ we obtain a stable configuration.

The \emph{Abelian property} of the ARW (Lemma~2.4 of~\cite{Rolla20}) states that if $\alpha$ and $\beta$ are two legal sequences of topplings that stabilize a configuration $\eta$, then $m_{\alpha}=m_{\beta}$. Furthermore, the order in which legal topplings are performed does not affect the final configuration obtained.

For a configuration $\eta$, we define the \emph{odometer} of $\eta$ to be $m_{\eta}=m_{\alpha}$, with $\alpha$ a legal sequence of topplings that stabilize $\eta$. By the Abelian property, this definition does not depend on our choice of $\alpha$.

\section{The toppling procedure} \label{section_description} \label{initialization} \label{section_properties}

In this section, we describe a sequence of topplings in the ring $\mathbb{Z}_N$. In Section~\ref{alternate_mode}, we use this procedure as the main step of an iterative procedure to stabilize the configuration on the closed ring. 

\subsection{Preliminary steps} \label{subsection:preliminaries}

Without loss of generality, we can assume that the initial configuration has at most one particle per site. Otherwise, because $\zeta<1$, we can a.s.\ topple sites with more than one particle until there are no sites with more than one particle.

Let $a \in 2 \mathbb{N}$ be fixed (its value will be determined later) and let $K:=a^2$. For simplicity, we assume that $N=(n+2)K$ for some even integer $n$. The ring $\mathbb{Z}_N$ will be identified with the interval $\{-K/2,-K/2+1, \dots, N-K/2-1\}$ in the obvious way. For $0 \le i \le n+1$, we call the set $[iK-K/2, iK+K/2)$ the \emph{block~$i$}. 

During the procedure, we will label each particle as a \emph{free} or a \emph{carpet} particle. Free particles will be further subdivided into \emph{frozen} or \emph{thawed}. At the beginning, we declare all particles at sites $iK$, $i=0,1,\dots, n+1,$ to be free and thawed, and all the other ones to be carpet particles. We choose one free thawed particle to be the \emph{hot} particle. The criterion for picking the hot particle will be described later. 

There are some special sites, called \emph{holes}, that will be ``moved'' during the procedure. At the beginning, we define the holes to be at sites $iK$ for $i=0,1,\dots, n+1$.

A \emph{defect} or \emph{defective site} is a site that is not a hole and has no particle. When an active particle reaches a defect, we say that the defect is \emph{fixed}. In this case, the site stops being defective.

The density $\zeta$ will be chosen later in such a way that, on average, each block contains less than one vacant site (a hole or a defect).

With the assumptions and definitions above, the initial configuration $\eta$ will satisfy the following properties, which will be preserved by the toppling procedure that will be described below.

\begin{enumerate}[label=(P\arabic*),ref=(P\arabic*)]
	\item \label{P1} Each block $i$ has exactly one hole, which is located at some site in $[iK, iK + a]$.
	
	\item \label{P2} Every site contains a carpet particle, except for the holes and defective sites.
	
	\item \label{P4}If there are defective sites in $[iK-K/2,iK+K/2)$, the hole is at $iK$.
	
	\item \label{P5} Carpet particles between the hole and $iK + a$ are active.

	\item \label{P6} Free particles are always active.
	
	\item \label{P7} All free particles are at sites $iK$ or $iK + a$, except, possibly, the hot particle.
	
	\item \label{P8} There is at most one frozen free particle per block.
	
	\item \label{P9} There is a frozen free particle in block $i$ if and only if the hole is at position $iK+a$. In this case, the frozen free particle in this block is at position $iK+a$.
	
	\item \label{P10} The hot particle is free and thawed.
	
\end{enumerate}

\subsection{Attempted emission}

We will describe in this section a procedure for an \emph{attempted emission}. An attempted emission can end either in a \emph{successful emission} or in a \emph{failure}.

Suppose we choose the hot particle at block $i \in \{1,\dots,n\}$. Note that the hot particle is never chosen in blocks $0$ and $n+1$. A \emph{successful emission} (or just \emph{emission}) occurs either  when the hot particle reaches $(i+1)K$ (resp. $(i-1)K+a$) and there are no defects in block~$i+1$ (resp. $i-1$); or when there are defects in block~$i+1$ (resp. $i-1$) and the hot particle reaches a vacant site in block~$i+1$ (resp. $i-1$). We remember that, by~\ref{P2}, vacant sites are either holes or defects. In either case, block $i$ is called the \emph{emitting} block and block $i+1$ (resp $i-1$) the \emph{receiving} block. When an emission occurs, we say that the hot particle is \emph{emitted}.

Note in particular that the hot particle may be emitted when reaching the hole of block $i-1$ (resp.\ $i+1$) if this hole is vacant and if there are defects in $[(i-1)K-K/2,(i-1)K]$ (resp.\ $[(i+1)K, (i+1)K+K/2]$).

Let us describe the procedure for an attempted emission. First, we will choose the hot particle. The hot particle is chosen using the following criterion. At the beginning of an attempted emission, pick the smallest $i \in \{1,2, \dots n\}$ such that there are no defects and there is at least one thawed free particle in block~$i$. If there is no such $i$, we declare the procedure finished. After~$i$ is chosen, we pick one of the free thawed particles of block~$i$ to be the hot particle. Note that these free thawed particles must be in $iK$ or $iK+a$ by~\ref{P7}. If there is a free thawed particle at~$iK$, we choose the hot particle at $iK$; else, we choose the hot particle at $iK+a$. 

By~\ref{P6} and~\ref{P10} we can always topple the site that contains the hot particle. We will only topple the site where the hot particle is.

We have the following cases.

\textbf{There is no frozen particle at site $iK+a$:} Topple the hot particle until it is emitted or reaches the hole in $[iK,iK+a]$. If it is emitted, we finish the attempted emission. If the emission occurs because the hot particle fixed a defect, we turn the hot particle into a carpet one. Otherwise, it remains free and thawed. Note that an emission may occur if the hot particle reaches a vacant hole and there are still defects in the neighbor block. If the hot particle reaches the hole and sleeps, we turn it into a carpet particle, move the hole to the site immediately to the right, turn the carpet particle in the new location of the hole into a free thawed particle, and choose this particle to be hot (\ref{P5} ensures that the new hot particle is active). If the hole reaches $iK+a$, we stop the attempted emission and declare the free particle at $iK+a$ as frozen. If this happens, we say that the attempted emission ends in a \emph{failure} and, if possible, a new hot particle is chosen in the way described above.

If the particle leaves the hole, there are two sub-cases:

\emph{-The particle is emitted.} In this case, we finish the attempted emission.

\emph{-The particle makes an excursion and returns to the hole.} In this case, we move the hole to the leftmost site of $[iK,iK+a]$ visited by the hot particle in this excursion, turn the hot particle into a carpet one, and turn the particle at the new position of the hole into a free thawed one and turn this particle in the hot particle. (Note that, if the hot particle made an excursion to the right of the hole, the leftmost site visited is the hole itself.)

\textbf{There is a frozen free particle at $iK+a$:} In this case, by the combination of~\ref{P2},~\ref{P4}, and~\ref{P9}, there are no defects on $[iK-K/2, iK+K/2)$, and all sites in $[iK,iK+a)$ have a carpet particle. We topple the hot particle until it is emitted. If the particle visits every site in $[iK,iK+a]$ before being emitted, we move the hole to $iK$, turn the frozen particle at $iK+a$ into a carpet particle and turn the carpet particle at $iK$ into a free thawed particle. This finishes the attempted emission.

See Figure~\ref{fig_procedure} for an example of a piece of the ring after some attempted emissions have been made.

\begin{figure}
	\includegraphics[width=\textwidth]{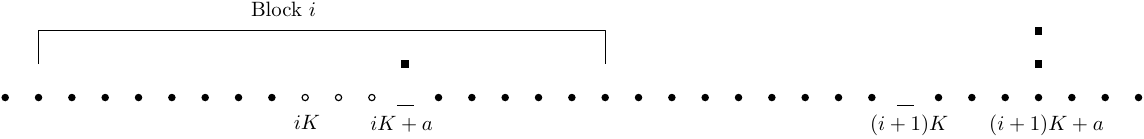}
	\caption{Example of possible state of a piece of the ring when an attempted emission is about to begin. Filled circles are active carpet particles, empty circles are sleeping carpet particles. The squares represent free particles, one of which is frozen. The holes are indicated by underlining the site. There is a frozen particle at $iK+a$.}
	\label{fig_procedure}
\end{figure}

\section{Properties preserved} \label{section_propertiesproof}

In this section, we prove that the properties~\ref{P1} to~\ref{P10} of a configuration are preserved by the procedure described in Section \ref{section_description}.
\begin{proposition}
	At the end of each attempted emission, each one of the properties~\ref{P1} to~\ref{P10} is preserved.
\end{proposition}
\begin{proof}
	This follows for the following reasons:
	
	\ref{P1} We never move the hole outside $[iK,iK+a]$, and every time we move the hole to a new location, the old location ceases to be a hole. Thus, there is only one hole at each block.
	
	\ref{P2} This happens because sites that are either defects or holes only cease to be defects and holes if a particle reaches them and is turned into a carpet particle; and with exception of the hole, we only topple sites with at least two particles.

	\ref{P4} We only move the hole after all defects in $[iK-K/2,iK+K/2)$ are fixed, because there are no defects in the block from which the hot particle is chosen.
	
	\ref{P5} If a particle fixes a defect, we relabel it as a carpet one and it remains active. If a particle sleeps, we move the hole to the right. We only move the hole to the left if all the particles between its new location and $iK+a$ are active.
	
	\ref{P6} Every time a free particle sleeps, we relabel it as a carpet particle.
	
	\ref{P7} We only move the hot particle, and every time the hot particle fixes a defect or sleeps, we relabel the hot particle as a carpet one. We only stop to move the hot particle when it is emitted or frozen. Thus, every time we choose a new hot particle, the old one is either relabeled as a carpet particle (if it fixed a defect), is frozen (at $iK+a$) or reached block $i-1$ or $i+1$.
	
	\ref{P8} If there is a frozen free particle at the block where the hot particle is chosen, the hot particle is always emitted or fixes a defect. In either case, we do not freeze any other particle.
	
	\ref{P9} This is because a particle is frozen if and only if the hole reaches $iK+a$, and is turned from frozen to carpet when the hole leaves $iK+a$.
	
	\ref{P10} We choose a free and thawed particle to be hot. When a hot particle is relabeled as a carpet one or declared frozen, a new hot particle is chosen.

\end{proof}

\section{Alternating modes} \label{alternate_mode}

We will now prove Theorem~\ref{slow_phase}. To do so, we apply cyclically the procedure of Section~\ref{section_description} in two \emph{modes}, A and B. At each mode, blocks $0$ and $n+1$ are called \emph{sinks} and blocks $n/2$ and $n/2+1$ are called \emph{sources}. At the end of each mode we change the enumeration of the blocks in a way such that the sinks of mode A are the sources of mode B and vice versa. Note that the hot particle is never chosen in the sink regions.

Let $D$ the number of defects at the end of a mode. We will define the following Condition and use it in the proof of Theorem~\ref{slow_phase}.

\begin{condition} \label{condition1}
	There are at least $\frac{7}{8}n+D$ free particles of which at most $\frac{5}{8}n$ are frozen.
\end{condition}

Condition~\ref{condition1} ensures that there is enough free thawed particles to keep the procedure running.

\begin{proposition}
	\label{PropCondition}
	There is $c>0$ depending on $\lambda$ such that, if at the beginning of a mode Condition~\ref{condition1} is satisfied, then at the beginning of the next mode Condition~\ref{condition1} will also be satisfied with probability at least $1-e^{-cn}$, for all $n$ sufficiently large, uniformly on all possible configurations at the beginning of the mode.
\end{proposition}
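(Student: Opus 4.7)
The plan is to analyse one application of the procedure block by block and combine the per-block estimate of Section~\ref{section_oneblock} with the exponential-moment bound of Section~\ref{bound_frozen}. Call a block $i$ \emph{bad} during the mode if an attempted emission originating in it ends in failure. By~\ref{P8} each bad block creates exactly one new frozen particle, while a good block only yields successful emissions. This reduces the proposition to (a) controlling the total number $B$ of bad blocks, and (b) verifying a particle-conservation identity that keeps the free-particle count above $\tfrac{7}{8}n+D'$, where $D'$ is the defect count at the end of the mode.

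For (a), the one-block estimate will say that a single attempted emission fails with probability at most $p=p(\lambda,a)$, where $a$ can be chosen to make $p$ arbitrarily small: failure requires $a$ consecutive excursions of the hot particle that never exit the source region through $iK$ or $(i+1)K$, and the random-walk tail estimates of Section~\ref{section_markov} bound each such excursion uniformly in the admissible configurations. Condition~\ref{condition1} provides at least $\tfrac{n}{4}+D$ thawed free particles at the start of the mode, hence at least that many attempted emissions occur before the mode can terminate. A stochastic domination by i.i.d.\ Bernoulli$(p)$ trials, packaged via the exponential-moment inequality of Section~\ref{bound_frozen}, then gives
\[
\pr(B\ge n/8)\le e^{-cn}
\]
for some $c=c(\lambda)>0$, once $a$ is large enough that $p<1/8$.

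For (b), I would use a direct accounting. Each successful emission either transports a free particle to the neighbouring block (preserving the total count of free particles) or fixes a defect there (decreasing both the free count and $D$ by one), and absorptions at the sinks are compensated by the corresponding decrements of $D$ along the path from source to sink. Combined with the hypothesis of at most $\tfrac{5}{8}n$ frozen particles at the start and the event $B\le n/8$, this gives at least $\tfrac{7}{8}n+D'$ free particles and at most $\tfrac{5}{8}n$ frozen particles at the end of the mode. The relabeling between modes is only a bookkeeping step and preserves these inequalities.

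The main obstacle is the dependency between blocks: a successful emission crosses block boundaries, so attempted emissions originating in different blocks do not draw on disjoint pools of randomness a priori, and the configuration inside a block $i$ at the moment a new attempted emission starts there depends on the history of attempts in $i\pm1$. Reducing $B$ to a sum of i.i.d.\ Bernoulli trials therefore requires revealing the instructions $\xi_x$ only as they are used and checking, inductively along the procedure, that the hypothesis of the one-block estimate remains valid for the next attempted emission; this decoupling, together with the exponential-moment technology of Section~\ref{bound_frozen}, is the technical core of the argument.
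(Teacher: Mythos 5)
There is a genuine gap in your frozen-particle accounting, and it is precisely the step where the paper's proof does something you have not reproduced. Your part (b) concludes ``at most $\tfrac58 n$ frozen particles at the end'' from ``at most $\tfrac58 n$ frozen at the start plus at most $n/8$ newly frozen,'' but that sum is $\tfrac34 n$, not $\tfrac58 n$; with this naive bookkeeping the frozen count could grow by $n/8$ every mode and Condition~\ref{condition1} would eventually fail. The paper does not bound ``new'' frozen particles and add them to the old ones. Instead it bounds the frozen particles \emph{present at the end of the mode}: those inside the set $E_n$ of blocks that performed at least one successful emission are controlled by Proposition~\ref{frozenE_n} ($F(E_n)\le n/8$ with probability $1-e^{-cn}$), while those outside $E_n$ are controlled purely combinatorially via~\ref{P8} (at most one per block), \emph{provided} one shows that $E_n$ is a connected set containing the source and reaching a sink, so that its complement (besides the sinks) has at most $n/2$ blocks, giving $n/2+n/8=\tfrac58 n$ in total. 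That structural step is itself proved by a counting contradiction: conservation of (free particles $-$ defects) gives at least $\tfrac78 n+D$ free particles at the end, and since frozen particles can only sit in $E_n$, in the at most two blocks adjacent to $E_n$, or be leftovers from the start, the inequality $\tfrac58 n+\tfrac n8+2<\tfrac78 n$ forces some free particles to have been emitted into a sink. Your proposal contains none of this source-to-sink geometry, and without it the conclusion ``at most $\tfrac58 n$ frozen'' does not follow. You also miss that the first mode needs separate treatment (free particles start spread over all blocks, so $E_n$ need not surround the source; the paper invokes Proposition~\ref{PropCondition'}, which bounds $F(\mathbb{Z}_N)$ using the hypothesis that all holes start at $iK$).

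A secondary but real problem is your claim that a single attempted emission fails with probability at most $p(\lambda,a)$, uniformly over admissible configurations, so that the number of bad blocks is dominated by i.i.d.\ Bernoulli trials. This is false as stated: a failure means the hole reaches $iK+a$, and if the hole already sits close to $iK+a$ at the start of the attempt, failure is not unlikely at all. The paper's one-block estimate (Proposition~\ref{oneblockestimate}) is not a per-attempt Bernoulli bound; it is a conditional exponential-moment bound on the terminal freezing indicator $S_i$, summed over the number of particles received from the right, and its proof tracks the hole position $H(j)$ (Lemma~\ref{lemma_positionhole}) to show that the hole is rarely in the dangerous region $(a/2,a]$, with a two-step conditioning to handle attempts that start from a bad hole position. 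Your closing paragraph correctly identifies the inter-block dependency as an issue and defers to the paper's Section~\ref{bound_frozen} machinery, which is fair, but the uniform-failure-probability reduction you propose to feed into it is not available, and even granting it, the final inequality of Condition~\ref{condition1} would still not follow without the $E_n$-spans-from-source-to-sink argument.
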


We will prove Proposition~\ref{PropCondition} at the end of this section.
Now we are ready to prove Theorem~\ref{slow_phase}.

\begin{proof}
	[{Proof of Theorem~\ref{slow_phase}}] We take $1-\frac{1}{4K}<\zeta<1$. Since the initial configuration is deterministic with $\zeta N$ particles, at the initial configuration there are at most $n/4$ defects, and at least $3n/4$ blocks with one free thawed particle. Thus, there will be at least one block without defects and with one free thawed particle, and we can start the toppling procedure. 
	
	We will call a mode \emph{successful} if Condition~\ref{condition1} is met at the end of the mode. We first note that, if a mode is successful, at least one jump is made. This happens because there is never more than one free thawed particle on a block which has defects. Thus, Condition~\ref{condition1} ensures that there are at least $n/4$ free particles on blocks without defects. At the end of the mode, such free thawed particles will be necessarily at the sink. Hence, at least one jump was made.

	Using Proposition~\ref{PropCondition},
	\begin{align*}
		\pr(\mathcal{J} \ge e^{\delta N}) &\ge\pr(\text{The first } e^{\delta N} \text{ modes are successful}) \\ & \ge (1-e^{-cN/K})^{e^{\delta N}}\ge 1-e^{(\delta-c/K)N}.
	\end{align*}
	The theorem follows if we take $\delta<c/K$ and $\delta'=c/K-\delta$.
\end{proof}

Let $E_n$ be the set of blocks that realize at least one successful emission during a mode, $F(E_n)$ be the number of frozen particles in $E_n$ and the end of a mode and $F(\mathbb{Z}_N)$ the total number of frozen particles in the ring $\mathbb{Z}_N$ at the end of a mode.

To prove the Proposition~\ref{PropCondition} we will make use of the following statements.
\begin{proposition} \label{frozenE_n} There exists a constant $c$ such that, if Condition~\ref{condition1} is satisfied then 
	$\pr(F(E_n) \ge \frac{n}{8}) \le e^{-cn}$ for all $n$ sufficiently large, uniformly on all possible configurations at the beginning of the mode. 
\end{proposition}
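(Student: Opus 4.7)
\textbf{Plan for Proposition~\ref{frozenE_n}.} By~\ref{P8} each block contains at most one frozen particle, so $F(E_n) = \sum_i X_i$ where $X_i$ is the indicator that block $i$ belongs to $E_n$ and holds a frozen particle at the end of the mode. The plan is to establish a conditional per-block bound $\pr(X_i = 1 \mid \mathcal{G}_i) \le p$ for some constant $p < 1/8$ depending only on $\lambda$, where $\mathcal{G}_i$ is the $\sigma$-algebra generated by the procedure up to the moment just before the first attempted emission in block $i$ during this mode. A Chernoff bound for indicators conditionally dominated by independent Bernoulli$(p)$ variables then yields $\pr(F(E_n) \ge n/8) \le e^{-cn}$.

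To derive the per-block bound, I would analyze the successive attempted emissions performed in block $i$ during the mode. For $X_i = 1$ to occur, block $i$ must have had at least one successful emission and must end with a frozen particle. This forces one of two exceptional events: (a) some attempted emission in block $i$ ends in failure, with hot particles successively falling asleep and pushing the hole all the way to $iK + a$, and the resulting frozen particle is never released by a later Case~2 success; or (b) block $i$ starts the mode with a frozen particle at $iK + a$ and every subsequent Case~2 successful emission is emitted without visiting all of $[iK, iK+a]$, so the original frozen particle is not released. Both scenarios hinge on random-walk excursions of the hot particle that must close inside a window of width $a$, which is precisely what the one-block estimate of Section~\ref{section_oneblock} controls (itself relying on the excursion tail bounds of Section~\ref{section_markov}). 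The fresh instructions $\xi^x_k$ consumed in block $i$'s attempted emissions are independent of $\mathcal{G}_i$, so the per-block bound holds uniformly in the conditioning, and by taking $a$ large enough (depending on $\lambda$) the probability can be made smaller than $1/8$.

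The main obstacle is this uniformity: one must verify that when block $i$ is entered, the invariants~\ref{P1}--\ref{P10} together with Condition~\ref{condition1} guarantee the structural hypotheses needed by the one-block estimate (well-defined hole location, a carpet supporting the hot particle's excursions, and defect counts in the neighboring blocks that do not degenerate the analysis). Once this is granted, the closing step is standard: conditional stochastic domination by independent Bernoulli$(p)$ variables combined with the bound $\pr(\mathrm{Bin}(n, p) \ge n/8) \le e^{-cn}$ for $p < 1/8$ completes the argument.
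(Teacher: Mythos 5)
Your plan treats $F(E_n)=\sum_i X_i$ as a sum of indicators that are conditionally Bernoulli$(p)$, $p<1/8$, given a $\sigma$-algebra $\mathcal{G}_i$ "up to just before the first attempted emission in block $i$". This is where the argument breaks: whether block $i$ ends the mode frozen is \emph{not} decided when block $i$ is first entered. By~\ref{P9} the block is frozen exactly when its hole ends at $iK+a$, and this final status depends on the entire mode, in particular on the random number of particles block $i$ later receives from block $i+1$; each such arrival can either freeze the block (a later failure) or unfreeze it (a Case-2 success that sweeps $[iK,iK+a]$). Since the number of these arrivals itself depends on what block $i$ emits to the right, there is a feedback loop between neighboring blocks, and the $X_i$ are not adapted to any block-indexed filtration in a way that yields a uniform conditional bound. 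The natural fix, "condition on the last attempted emission in block $i$ and apply the excursion estimates", also fails as stated, because "being the last attempt" is not a stopping time and is positively correlated with ending in a failure (a failure freezes the block and makes it less likely that further thawed particles are available), so the per-attempt bound cannot simply be transferred to the final attempt. A smaller but related omission: the instructions used in $[iK+a+1,(i+1)K)$ are shared between emissions from block $i$ and from block $i+1$, so "fresh instructions independent of $\mathcal{G}_i$" requires the paper's device of two independent stacks $\xi^{y,L},\xi^{y,R}$, which you do not invoke.

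The paper's proof is built precisely to circumvent this feedback. It parametrizes the block-$i$ outcome by the incoming flow, defining $S_i(m,d)$, $L_i(m,d)$, $R_i(m,d)$, $D_i(m,d)$ as functions of the number $m$ of particles received from block $i+1$ and $d$ of fixed defects, proves an invariance statement (Proposition~\ref{invariance}) and a mass-balance identity~\eqref{mass-balance} that let the true random flow be replaced by a sum over all possible values $(m_i,d_i)$, and then bounds the exponential moment $\ex[e^{48F(E_n)}]\le (n+1)4^ne^{4n}$ by iterated conditioning on $\mathcal{F}_{r-1}$ and the one-block estimate, Proposition~\ref{oneblockestimate}(i). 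Note that that estimate is not a per-block probability bound $p<1/8$: it bounds by $e^4$ an expected \emph{sum over all $m$} of weights $e^{48S_i(m,d)}$ restricted to $\{L_i(m,d)=\ell\}$, which is exactly the price of not knowing in advance how many particles the block will receive; the conclusion then follows from Markov's inequality at scale $e^{6n}$ rather than from a binomial Chernoff bound. Without this decoupling-and-summation structure (or some substitute for it), the central step of your proposal — the uniform conditional domination by independent Bernoulli variables — is unjustified.
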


\begin{proposition} \label{PropCondition'}
	There exists a constant $c>0$ such that, if at the beginning of the mode all the holes are at positions $iK$, for $i = 1 ,\dots, n$, then $\pr(F(\mathbb{Z}_N) \ge \frac{n}{8}) \le e^{-cn}$ for all $n$ sufficiently large, uniformly on all possible configurations at the beginning of the mode.
\end{proposition}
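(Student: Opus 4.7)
The plan is to combine the one-block estimate of Section~\ref{section_oneblock} with a Chernoff-type concentration argument. Under the hypothesis that all holes lie at $iK$, no frozen particles exist at the start of the mode by~\ref{P9}. A frozen particle is created only when an attempted emission fails, placing it at $iK+a$; by~\ref{P8} there is at most one per block, and after such a failure block $i$ has no thawed free particle and so cannot fail again during the same mode. Letting $Y_i$ be the indicator that block $i$ undergoes a failure during the mode, we therefore have $F(\mathbb{Z}_N)=\sum_{i=1}^{n} Y_i$.

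The central step is a uniform conditional bound $\pr(Y_i = 1 \mid \mathcal{F}_i) \le p$, where $\mathcal{F}_i$ is the history up to the moment block $i$ is first selected for an attempted emission. At that moment block $i$'s hole is still at $iK$, since only an attempt originating in block $i$ can move its own hole, and block $i$ has no defects, which is a prerequisite for selection. Under these favorable conditions, the one-block estimate of Section~\ref{section_oneblock} applies: within an attempted emission the hole's position evolves as a Markov chain with rightward $+1$ steps coming from ``sleep-at-the-hole'' events and leftward jumps coming from excursion returns, whose tails are controlled by the random-walk bounds of Section~\ref{section_markov}. For $a$ (chosen at the start of Section~\ref{section_description}) sufficiently large relative to $\lambda$, this makes $p$ as small as desired.

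Given such a uniform conditional bound, $(Y_i)$ is stochastically dominated by i.i.d.\ Bernoulli$(p)$ variables, and a standard Chernoff estimate yields $\pr(F(\mathbb{Z}_N)\ge n/8)\le e^{-cn}$ for a suitable $c=c(\lambda)>0$. The main obstacle is establishing the conditional bound in the presence of nontrivial inter-block interactions: previous attempts in other blocks may have sent the hot particle into block $i$ via excursions, consuming some instructions there and possibly rearranging carpet sleep states. The key point is that, conditional on $\mathcal{F}_i$, the still-unused portions of the instruction stacks at each site in block $i$ remain i.i.d.\ with the original law, so the one-block dynamics starting from the favorable macroscopic state (hole at $iK$, no defects, carpet occupancy given by~\ref{P2}) are distributed exactly as required by the estimate of Section~\ref{section_oneblock}.
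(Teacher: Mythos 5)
Your reduction misses the central difficulty that the paper's machinery in Section~\ref{bound_frozen} is built to handle: the two-sided dependence between blocks. First, two structural claims are off. A block can fail more than once in a mode: after a failure the block keeps receiving free particles from its neighbours (they arrive at $iK$ or $iK+a$ and become thawed by~\ref{P7}), it can later be selected again, and when the hot particle visits all of $[iK,iK+a]$ the frozen particle is defrosted and the hole reset to $iK$, after which a new failure is possible. So $F(\mathbb{Z}_N)=\sum_i Y_i$ is false (only $F(\mathbb{Z}_N)\le\sum_i Y_i$, since what counts is whether the hole sits at $iK+a$ at the \emph{end} of the mode). More seriously, the bound $\pr(Y_i=1\mid\mathcal{F}_i)\le p$ with $p$ small cannot be obtained the way you suggest: the per-attempt failure probability is only a constant (of order $e^{-150}$, not decaying in $n$), while the number of attempted emissions block $i$ performs during the mode is random, determined by the inflow of particles from block $i+1$ — which is not measurable at the time block $i$ is first selected and is itself influenced by block $i$'s own emissions. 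Taking $a$ large shrinks the per-attempt probability to a constant, not the probability that the block \emph{ever} fails (or ends frozen) over an unbounded number of attempts; and since $Y_i$ (or $S_i$) is not determined at your conditioning time, the stochastic domination by i.i.d.\ Bernoulli variables does not follow from such a conditional bound. Your observation that the unused instructions in block $i$ are still i.i.d.\ addresses only the reuse-of-randomness issue, not this feedback loop.

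The paper breaks this loop differently: it parametrizes the one-block dynamics by the number $m$ of particles received from the right (the configurations $\eta'_i(m,d)$), proves the invariance statement (Proposition~\ref{invariance}) using separate left/right instruction stacks, ties neighbouring blocks together through the mass-balance equation~\eqref{mass-balance} $L_i=M_{i-1}+D_{i-1}$, and then uses the \emph{weighted} one-block estimate (item (ii) of Proposition~\ref{oneblockestimate}), in which one sums $e^{48S_i(m,d)}\ind_{\{L_i(m,d)=\ell\}}$ over all $m$, so that the randomness of the inflow is absorbed into the estimate rather than conditioned away. This telescopes over blocks to give $\ex[e^{48F(\mathbb{Z}_N)}]\le(n+1)4^ne^{4n}$, and Markov's inequality finishes the proof. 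The crucial ingredient making the sum over $m$ (equivalently over attempts) converge is Lemma~\ref{lemma_emission}, which gives geometric tails for the number of attempts between consecutive left-emissions — an ingredient your argument has no substitute for. As written, your proposal has a genuine gap at its key step.
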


\begin{proof}
	[Proof of Proposition~\ref{PropCondition}] 
	Note that the number of free particles minus the number of defects is conserved by the toppling procedure, therefore at the end of the mode there are still at least $\frac{7}{8}n+D$ free particles.
	
	In the first mode all the holes are at positions $iK$, for $i = 1 ,\dots, n$, hence by Proposition~\ref{PropCondition'}, the probability of having at most $n/8$ frozen particles in total on the ring at the end of the mode is at least $1-e^{-cn}$. In this event, Condition~\ref{condition1} is met at the end of the mode.
	
	Now we will consider the case from the second mode onwards. Let us show that, on the event $\{F(E_n) \le n/8\}$, Condition~\ref{condition1} is met at the end of the mode.
	
	We first argue that in this event, there will be free particles inside the sink blocks. From the second mode onward, all free thawed particles begin the mode in the source blocks, except free thawed particles in blocks which still have defects, if any. Therefore, $E_n$ is a connected set surrounding the sources, and new frozen particles outside $E_n$ must be in the two blocks adjacent to $E_n$.
	
	If $\{F(E_n) \le n/8\}$ happens, at the end of the mode, there are at most 2 new frozen particles outside $E_n$ and $5n/8$ particles that were frozen at the beginning of the mode. Since $5n/8+n/8+2<7n/8$ (if $n>16$), we conclude that the number of free thawed particles at the end of the mode is strictly higher than the number of defects. On a block with a defect there can be at most one free thawed particle; thus, the number of free thawed particles on blocks with defects is at most the number of defects, and there must have been free particles that are not frozen by the end of the mode, and this can only be achieved if free particles are emitted to a sink block.
	
	Therefore, we conclude that, in fact, $E_n$ must contain the source block and be adjacent to a sink block, and since $E_n$ is a connected collection of blocks, the number of blocks in the complement of the union of $E_n$ and the sink blocks is at most $n/2$. In particular, by~\ref{P8}, there are at most $n/2$ frozen particles outside $E_n$, hence at most $5n/8$ frozen particles in total.
	
	By Proposition~\ref{frozenE_n}, $\pr(F(E_n) < n/8)\ge 1-e^{-cn}$, and Proposition~\ref{PropCondition} is proven. 
\end{proof}

\section{Upper bound for the number of frozen particles} \label{bound_frozen}

The main goal of this Section is to prove Proposition~\ref{frozenE_n} and~\ref{PropCondition'}. In this section, the analysis is restricted to the dynamics of a single mode. 

\subsection{$\sigma$-algebras and flow of particles} \label{ssection_obs}

With each site $y \in [iK-K/2,iK+K/2)$, $i=0,i,\dots, n+1$ we associate \emph{three} independent copies of the stack of instructions, $\xi^{y}$, $\xi^{y,L}$ and $\xi^{y,R}$. Each time one of those sites is toppled we use the first unused instruction in $\xi^{y,L}$ if the hot particle is chosen at $\{(i-1)K, (i-1)K+a\}$, the first unused instruction in $\xi^y$ if the hot particle is chosen at $\{iK,iK+a\}$ or the first unused instruction in  $\xi^{y,R}$ if the hot particle is chosen at $\{(i+1)K, (i+1)K+a\}$. Thus, each site $y$ in block $i$ has a stack associated to the block $i-1$ (except for $i=0$), one stack associated to block $i$ and one stack associated to block $i+1$ (except for $i=n+1$).

We define the following $\sigma$-algebras
\begin{align*}
	\mathcal{F}_i=\sigma&(\{\xi^y, \xi^{y,R}, \xi^{y,L}: y\in (-K/2,iK-K/2)\} \\&\cup \{\xi^{y,L}, \xi^{y}: y \in [iK-K/2, iK+K/2)\} \\ &\cup\{\xi^{y,L}: y \in [iK+K/2, (i+1)K+K/2)\}).	
\end{align*} 

In words, $\mathcal{F}_i$ comprises information about stacks associated with blocks $0, 1, \dots, i$.

Let $\eta$ be the configuration of the ring at the beginning of a mode, and, for $0 \le k \le n$, define $\eta_k$ to be the restriction of $\eta$ to all the blocks $0,1,\dots, k$. Let $d'$ be the number of defects in $\eta_k$. We define, for $(m,d) \in \{0\} \times \{0,1,\dots, d'\} \cup \mathbb{N} \times \{d'\}$, $\eta'_k(m,d)$ in the following way: first, we use the toppling procedure of Section~\ref{section_description} restricted to blocks $0,1, \dots k$. After this is done, we define $\eta'_k(m,d)$ as the resulting configuration after block~$k$ receives $m+d$ particles from block $k+1$ ($d$ particles for defective sites, $m-1$ particles arrive necessarily at $kK+a$ and $1$ particle may arrive either at $kK$ if $kK$ is vacant and there are defects in $(kK-K/2,kK)$ or at $kK+a$ if not.)

Let $1 \le i \le k \le n$, and $m$ and $d$ integers for which $\eta'_k(m,d)$ is well-defined. We run the procedure described in Section~\ref{section_description} on $\eta'_k(m,d)$ and define the following random functions:
\begin{itemize}
	\item $L_i^k(m,d)$ is the number of particles that are emitted from block $i$ to block $i-1$;
	
	\item $R_i^k(m,d)$ is the number of particles emitted from block $i$ to block $i+1$;
	
	\item $D^k_i(m,d)$ is the number of particles that reached vacant sites in block~$i$ from block~$i+1$ and will not be available for attempted emissions;
	
	\item $S^k_i(m,d)$ is the number of particles frozen in block~$i$ after the procedure is finished.
\end{itemize}

By~\ref{P8}, $S_i^k(m,d)$ can only be $0$ or $1$.

We also define $M^k_i$ as the number of particles emitted from block~$i$ to block~$i-1$ that will be available for attempted emissions in block~$i-1$. 

For the sake of simplicity, we may also write $L_i$, $R_i$, $D_i$, $M_i$ and $S_i$ instead of $L_i^i$, $R_i^i$, $D_i^i$, $M_i^i$ and $S_i^i$.

A particle that is emitted from block $i$ to block $i-1$ can only fix a defect in block $i-1$, reach the vacant hole or wait at $iK+a$ to be used in an attempted emission. Therefore, we can state the following \text{mass-balance equation}
\begin{equation} \label{mass-balance}
	L_i^k(M_i^k, D_i^k)=M_{i-1}^k+D_{i-1}^k.
\end{equation}

The following proposition gives a relation between those random functions. 
\begin{proposition} \label{invariance}
	For every $1 \le i \le n$,
	\begin{align*}
		S_i(M_i^n,D_i^n)=S_i^{n}(0,0), L_i(M_i^n,D_i^n)=L_i^{n}(0,0), \\ R_i(M_i^n,D_i^n)=R_i^{n}(0,0), D_i^i(M_i^n,D_i^n)=D_i^{n}(0,0).
	\end{align*}
\end{proposition}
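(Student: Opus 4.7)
The plan is to use the Abelian property of ARW (Lemma~2.4 of~\cite{Rolla20}), exploiting the careful stack construction that makes the block-$i$ dynamics $\mathcal{F}_i$-measurable.

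First, I would record that every toppling of a site $y$ in blocks $0,\dots,i$ during the full procedure on $\eta_n$ consumes an $\mathcal{F}_i$-measurable instruction. For $y \in (0,iK+a]$ this is immediate from the definition of $\xi^y$; for $y\in (iK+a,(i+1)K)$ the hot particle reaching $y$ must have originated in block $i$ (i.e., come ``from the left''), so the construction of Section~\ref{ssection_obs} directs us to the stack $\xi^{y,L}$, which is again $\mathcal{F}_i$-measurable. Conversely, the restricted procedure on $\eta'_i(M_i,D_i)$ uses exactly the same collection of stacks for its topplings.

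Second, I would isolate the interaction between blocks $\le i$ and blocks $>i$. By the mass-balance identity~\eqref{mass-balance} applied to block $i+1$, the total number of particles arriving into block $i$ from block $i+1$ throughout the full procedure equals $M_i+D_i$, and their routing (one possibly to $iK$, the others to $iK+a$, with $D_i$ of them filling defects) matches verbatim the placement rule in the definition of $\eta'_i(M_i,D_i)$. By the Abelian property I would defer every such arrival to the very beginning of the procedure: this has the effect of replacing the initial configuration on blocks $\le i$ by $\eta'_i(M_i,D_i)$ and leaving a legal sequence of topplings on these blocks that yields the same odometer. Since the counts $L_i$, $R_i$, $D_i$, $S_i$ are determined by this odometer at the relevant boundary sites together with the (unchanged) $\mathcal{F}_i$-stacks, the four equalities of the proposition follow.

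The main obstacle is the second step: one must carefully argue that deferring the arrivals preserves legality of the toppling sequence, and in particular that the placement rule in $\eta'_i(m,d)$ correctly mimics the order in which particles arrive at $iK$ versus $iK+a$ during the full procedure. This bookkeeping relies on properties~\ref{P4} and~\ref{P9}, which pin down when the block-$i$ hole is at $iK$ versus at $iK+a$ and thus which boundary site receives the next incoming particle; the remaining arrivals then all fall on $iK+a$ and are absorbed there unambiguously into the block-$i$ hot zone.
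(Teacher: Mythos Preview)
Your approach leans on the Abelian property in a place where it cannot carry the weight. The quantities $S_i$, $L_i$, $R_i$, $D_i$ are \emph{not} functions of the odometer: $S_i$ counts \emph{frozen} particles, a label assigned by the procedure; $L_i$ and $R_i$ count \emph{emissions}, which are procedure-specific events tied to the choice of hot particle and the block it originated from. Moreover, the procedure does not stabilize the configuration, so Lemma~2.4 of~\cite{Rolla20} (which compares odometers of \emph{stabilizing} legal sequences) does not directly apply. ``Deferring arrivals to the beginning'' is also not a reordering of topplings---it changes the initial configuration---so the Abelian property says nothing about it. Your claim that ``the counts $L_i$, $R_i$, $D_i$, $S_i$ are determined by this odometer at the relevant boundary sites together with the (unchanged) $\mathcal{F}_i$-stacks'' is therefore unjustified and in fact false as stated.

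The paper's argument bypasses all of this by exploiting the \emph{priority rule} built into the procedure: the hot particle is always chosen in the smallest-indexed eligible block. Consequently, when the full procedure on $\eta_n$ is run, blocks $1,\dots,i$ are processed exactly as they would be in the restricted procedure, and any particle arriving from block $i+1$ either fills a defect, lands at the vacant hole $iK$, or waits at $iK+a$ with \emph{least} priority---precisely matching the definition of $\eta'_i(M_i,D_i)$. The two-stack trick for sites in $(iK+a,(i+1)K)$ ensures that emissions from block $i+1$ do not consume instructions used by block~$i$. Thus the full and restricted procedures are \emph{literally identical step by step} on blocks $\le i$, which immediately gives all four equalities. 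No Abelian-type rearrangement is needed; the priority rule is the key mechanism you are missing.
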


\begin{proof}
	For $i=n$, the result is immediate. Fix some $i \in\{1,\dots, n-1\}$. Whenever possible, the procedure described on Section~\ref{section_description} chooses the hot particle in blocks $1,2,\dots, i-1$, and when this choice is possible, it depends only on the configuration in blocks $1,2,\dots, i-1$. When the choice in those blocks is not possible, the procedure either stops or waits for a particle to come to block~$i$ from block~$i+1$. This particle will either fix a defect in block $i$, reach the vacant hole at $iK$ or wait at $iK+a$ until it is chosen to be the hot particle. By the way the procedure was designed, these particles will have the least priority to be chosen as the hot particle, and will not affect the dynamics in the first~$i$ blocks until they are chosen to be hot.
	
	The only other way that attempted emissions in block~$i+1$ can affect the outcome on blocks~$1$ to~$i$ is by using instructions on stacks of instructions in sites between $iK+a$ and $(i+1)K$, but this is fixed by using two i.i.d.\ stacks of instructions in those sites $\xi^{y,L}$ and $\xi^{y,R}$, one for particles coming from the right and other for particles coming from the left.
\end{proof}

\subsection{Bounding the exponential expectation} \label{ssection_bound}

To prove Proposition~\ref{frozenE_n}, we will use the following one-block estimate, to be proved in Section~\ref{section_oneblock}.
\begin{proposition} \label{oneblockestimate}
	~
	\begin{enumerate}[label*=(\roman*)]
		\item For any initial configuration $\eta_0$ satisfying Properties~\ref{P1} to~\ref{P10} and $K$ large enough,
		\begin{align*} 
			\sup_{\ell \ge 0,d\ge 0} \expec \Bigg[\sum_{m=0}^{\infty} e^{48 S_i(m,d)}\ind_{\{L_i(m,d)=\ell\}}\ind_{\{L_i(m,d)+R_i(m,d) \ge 1\}} ~\Bigl\vert ~ \mathcal{F}_{i-1}\Bigg] \leq e^4.
		\end{align*}
		where the sum runs over all $m$ for which $\eta'_i(m,d)$ is defined.
		
		\item For any initial configuration $\eta_0$ satisfying Properties~\ref{P1} to~\ref{P10} and with holes only at positions $iK$, $1 \le i \le n$ and $K$ large enough,
		\begin{align*} 
			\sup_{\ell \ge 0,d\ge 0} \expec\Bigg[\sum_{m=0}^{\infty} e^{48 S_i(m,d)}\ind_{\{L_i(m,d)=\ell\}} ~\Bigl\vert ~ \mathcal{F}_{i-1}\Bigg] \leq e^4,
		\end{align*}
		where the sum runs over all $m$ for which $\eta'_i(m,d)$ is defined.
	\end{enumerate}
\end{proposition}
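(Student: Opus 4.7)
The plan is to condition on $\mathcal{F}_{i-1}$, which by Proposition~\ref{invariance} fixes $\eta_i'(m,d)$ and all dynamics to the left of block $i$; the remaining randomness consists of the internal walk stacks $\xi^y$ for $y\in[iK,iK+a]$ and the right-excursion stacks $\xi^{y,R}$ for $y\in(iK+a,(i+1)K)$. Using the Abelian property we may schedule the $m$ particles arriving from block $i+1$ one at a time, in any convenient order. Since $S_i(m,d)\in\{0,1\}$ by~\ref{P8}, we split the sum as
\begin{equation*}
\sum_m \ind_{\{S_i=0,\,L_i=\ell,\,L_i+R_i\ge 1\}} \;+\; e^{48}\sum_m \ind_{\{S_i=1,\,L_i=\ell,\,L_i+R_i\ge 1\}},
\end{equation*}
and bound the two contributions separately.

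For the $S_i=0$ piece, the first step is to observe that $L_i(m,d)$ is nondecreasing in $m$ with unit increments, so $\{m:L_i(m,d)=\ell\}$ is an integer interval whose expected length has to be bounded uniformly in $\ell$ and $d$. The heart of the argument is that, once $\ell$ left-emissions have been realized, the next arriving particle enters at $iK+a$ (or at the hole), performs a sleepy random walk inside block $i$, and is absorbed either by a defect, by the hole, or by leaving the block; the probability that it leaves (hence increments $L_i+R_i$) is bounded below by a constant depending only on $\lambda$, via random walk hitting estimates on the interval of length $K$. Applying the strong Markov property at each fresh arrival stochastically dominates the interval length by a geometric random variable, yielding a uniform constant bound $C(\lambda)$ on the first sum, which is strictly less than $e^4$.

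For the $S_i=1$ piece, the $e^{48}$ factor must be compensated by the rarity of freezing. The event $\{S_i=1\}$ requires the hole to advance from its initial position in $[iK,iK+a]$ all the way to $iK+a$, which in turn demands $\Theta(a)$ consecutive excursions of hot particles that return to the current hole site rather than escaping the block. By the random walk excursion tail bounds deferred to Section~\ref{section_markov}, each such excursion ``fails'' with probability at most $q<1$ depending only on $\lambda$, and the strong Markov property at each advance of the hole makes these failures conditionally independent, so $\pr(S_i(m,d)=1\mid\mathcal{F}_{i-1})\le q^a$. Combined with the interval-length bound from the previous paragraph, the second sum contributes at most $C(\lambda)\,e^{48}q^a$, which is made smaller than $e^4-C(\lambda)$ by choosing the free parameter $a\in 2\mathbb{N}$ large enough depending on $\lambda$.

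The main technical obstacle is justifying the strong Markov property at each advance of the hole when the incoming particles from block $i+1$ are interleaved with internal excursions of the block; this is exactly what the two-stack construction $\xi^{y,L},\xi^{y,R}$ is designed to resolve, by decoupling the instructions driving right-going excursions from those of the internal dynamics of block $i$, so that the renewal argument is legitimate. Part~(ii) follows from the same three steps: the stronger hypothesis that every hole sits at $iK$ rules out via~\ref{P9} any pre-existing frozen particle at $iK+a$, and trivializes the $L_i+R_i=0$ case (which can then only correspond to a configuration with no free thawed particles nor incoming ones, contributing at most $1$ that is absorbed into $C(\lambda)$), so the indicator $\ind_{\{L_i+R_i\ge 1\}}$ may be dropped without cost.
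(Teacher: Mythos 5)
Your high-level decomposition (condition on $\mathcal{F}_{i-1}$, split according to $S_i\in\{0,1\}$, bound the number of $m$'s with $L_i=\ell$ by a geometric, and show freezing is rare enough to beat $e^{48}$) is the same as the paper's, but the central quantitative step for the $S_i=1$ piece is wrong. You claim $\pr(S_i=1\mid\mathcal{F}_{i-1})\le q^a$ because freezing ``demands $\Theta(a)$ consecutive excursions of hot particles that return to the current hole site rather than escaping the block,'' each costing a factor $q<1$. This misidentifies the mechanism: returning to the hole is the \emph{typical} outcome of an excursion (escape requires reaching distance of order $K/2=a^2/2$), so these returns do not each cost a constant factor; the hole advances one step only when the hot particle falls asleep \emph{at the hole}, and it retreats to the leftmost site visited whenever an excursion returns. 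Moreover,~\ref{P1} only places the initial hole somewhere in $[iK,iK+a]$, so the hole may already sit next to $iK+a$ and no $\Theta(a)$ traversal is needed — this is precisely why part (i) carries the indicator $\ind_{\{L_i+R_i\ge1\}}$ and why part (ii) needs the extra hypothesis on hole positions (your reading of the role of that hypothesis is also off: it is there to guarantee $H(0)=0$, via Remark~\ref{remark_lemma}, so that the $j=0$ term cannot already contribute $e^{48}$). The paper's actual argument (Lemmas~\ref{lemma_expecY}--\ref{lemma_positionhole}) is a drift estimate: the leftward jump of the hole upon a returning excursion has heavy tail $\sim 1/k(k+1)$, giving expected displacement about $-\log(a)/(2(1+\lambda))$ per step, which for $a\ge 6e^{120(1+\lambda)}$ overwhelms the $+1$ sleep increments; a Hoeffding bound then shows that after each attempted emission the hole lies in $[0,a/2]\cup\{a\}$ except with probability $e^{-150}$, and the $S=1$ contribution is closed by truncating the number of attempted emissions per value of $\ell$ at a fixed $k_0=250$ and checking $k_0\,e^{48}e^{-150}<1$. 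Nothing decays like $q^a$ in the final bound.

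There is a second, smaller gap in the $S_i=0$ piece: to dominate $\#\{m:L_i(m,d)=\ell\}$ by a geometric variable you need each fresh arrival to increment $L_i$ — a \emph{left} emission — with probability bounded below, not merely to leave the block, since right emissions leave $L_i$ unchanged and the sum is restricted by $\ind_{\{L_i=\ell\}}$. The paper's Lemma~\ref{lemma_emission} supplies exactly this via a gambler's-ruin computation giving a left-emission probability of at least $1/5$ per two attempted emissions. Your assertion that $L_i(m,d)$ increases by unit increments in $m$ also needs justification; the paper sidesteps it by reindexing the sum over attempted emissions $j$, using only that each arriving particle triggers at least one attempted emission and each attempted emission emits at most one particle to the left.
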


\begin{proof}
	[Proof of Proposition~\ref{frozenE_n}]
	
	Denote, for simplicity, $G^i_{m,d}$ as the event $\{(R_i+L_i)(m,d)\ge 1\}$. We will first show that $ \expec[e^{48 F (E_n)}]\le(n+1)4^ne^{4n}.$
	
	By definition, $F(E_n)$ is the number of particles that are frozen on the collection of blocks that perform at least one emission. Also, at the beginning of the procedure there are at most $n$ free particles (one at each block), and the number of free particles never increases. Since $\zeta > 1-1/(4K)$, we can bound the number of defects by $n$. Therefore, using the definition of $S_i$, Proposition~\ref{invariance} and the mass-balance equation~\eqref{mass-balance}:
	\begin{flalign*}
		\expec[e^{48 F(E_n)}] &= \expec \Bigg[\sum_{\substack{d_0, \dots d_{n} \\ m_0, \dots, m_{n}}} \prod_{i=1}^{n} e^{48 S_i(m_i,d_i)}\ind_{\{M_i^n=m_i\}}\ind_{\{D_i^n=d_i\}} \ind_{\{L_i(m_i,d_i)=m_{i-1}+d_{i-1}\}}\ind_{G^i_{m_i,d_i} } \Bigg] 
		\\& \le \sum_{\substack{d_0, \dots d_{n}\\ d_0+\dots+ d_{n}\le n}}\expec \Bigg[\sum_{m_0=0}^{n} \sum_{m_1,\dots, m_{n}} \prod_{i=1}^n e^{48 S_i(m_i,d_i)} \ind_{\{L_i(m_i,d_i)=m_{i-1}+d_{i-1}\}}\ind_{G^i_{m_i,d_i} }\Bigg].
	\end{flalign*}
	
	The number of non-negative integer solutions of the inequality $d_0+d_1+\dots+d_{n}\le n$ is bounded above by $4^n$. Thus,
	\begin{align*} 
		\expec [e^{48 F(E_n)}] \le 4^n \cdot \sup_{\{d_0, \dots, d_{n}\}} \expec \Bigg[\sum_{m_0=0}^{n} \sum_{m_1,\dots, m_{n}} \prod_{i=1}^{n} e^{48 S_i(m_i,d_i)} \ind_{\{L_i(m_i,d_i)=m_{i-1}+d_{i-1}\}}\ind_{G^i_{m_i,d_i} }\Bigg].
	\end{align*}
	
	We will show by induction that, for every positive integer $r\le n$, and every possible choice of $d_0, \dots d_n$,
	\begin{align*}
		\expec \Bigg[\sum_{m_0=0}^{n} \sum_{m_1, \dots, m_r} \prod_{i=1}^r e^{48 S_i(m_i,d_i)} \ \ind_{\{L_i(m_i,d_i)=m_{i-1}+d_{i-1}\}}\ind_{G^i_{m_i,d_i} }\Bigg] \le (n+1)e^{4 r}.
	\end{align*}
	
	Indeed, for $r=1$ the expectation above is bounded by $n+1$, and the base case of induction is finished. Suppose the inequality is valid for every integer between $1$ and $r-1$. By conditioning on $\mathcal{F}_{r-1}$ and using item (i) of Proposition~\ref{oneblockestimate},
	\begin{align*}
		& \!\!\!\!\!\!\!\!\!\!\!\!
		\expec \Bigg[\sum_{m_0=0}^{n} \cdots \sum_{m_r} \prod_{i=1}^r e^{48 S_i(m_i,d_i)} \ind_{\{L_i(m_i,d_i)=m_{i-1}+d_{i-1}\}}\ind_{G^i_{m_i,d_i} }\Bigg] \\
		&\le \expec \Bigg[\sum_{m_0} \sum_{m_1} \cdots \sum_{m_{r-1}} \prod_{i=1}^{r-1} e^{48 S_i(m_i,d_i)}\ind_{\{L_i(m_i,d_i)=m_{i-1}+d_{i-1}\}} \ind_{G^i_{m_i,d_i} } \Bigg] \\& \qquad \cdot \sup_{\ell \ge 0, d \ge 0} \expec\Bigg[\sum_{m_r} e^{48 S_r(m_r,d)}\ind_{\{L_r(m_r,d)=\ell\}}\ind_{G^r_{m_r,d_r}} \mid \mathcal{F}_{r-1}\Bigg] \\ &\le e^4 \expec \Bigg[ \sum_{m_0}^n\cdots \sum_{m_{r-1}} \prod_{i=1}^{r-1} e^{48 S_i(m_i,d_i)}\ind_{\{L_i(m_i,d_i)=m_{i-1}\}} \ind_{G^i_{m_i,d_i} } \Bigg] \\& \le (n+1)e^{4}e^{4(r-1)}=(n+1)e^{4 r}.
	\end{align*}
	Therefore, 
	$\expec [e^{48 F(E_n)}]\le (n+1)4^n e^{4 n}.$
	
	To conclude the bound, we note that, by Markov's inequality,
	\begin{align*}\pr(F(E_n) \ge n/8)&\le e^{-6 n}\expec e^{48\text F(E_n)} \\ &\le e^{-6n} \cdot (n+1) 4^n e^{4 n} \le e^{-cn}
	\end{align*}
	for a constant $c>0$ chosen properly and $n$ sufficiently large.
\end{proof}

\begin{proof}[Proof of Proposition~\ref{PropCondition'}] The proof of this proposition is completely analogous to the proof of Proposition~\ref{frozenE_n}, but summing over all the blocks, not only the ones that realize at least one emission, and using item~$(ii)$ of Proposition~\ref{oneblockestimate}, because in the first mode all holes are at positions $iK$, $i=1,2,. \dots, n$.
\end{proof}

\section{One-block estimate} \label{section_oneblock}

This section is devoted to proving Proposition~\ref{oneblockestimate}. We will begin by proving item $(i)$.

Let $i$ be fixed. We will consider two cases. First, we will analyze the case where there are defects in $[iK-K/2,iK+K/2)$ after we stabilize $\eta'_i(m,d)$ using the procedure described in Section~\ref{section_description}. The only value of $m$ compatible with the definition of $\eta'_k$ in this case is $m=0$, and, by Properties~\ref{P4} and~\ref{P9}, $S_i(0,d)=0$. It follows that
\begin{align*} 
	\sup_{\ell \ge 0,d\ge 0}& \expec\Bigg[\sum_{m=0}^{\infty} e^{48 S_i(m,d)}\ind_{\{L_i(m,d)=\ell\}}\ind_{G^i_{m,d}} \mid \mathcal{F}_{i-1}\Bigg] \\ &= 
	\sup_{\ell \ge 0,d\ge 0} \expec[ e^{48 S_i(0,d)}\ind_{\{L_i(0,d)=\ell\}}\ind_{G^i_{0,d}} \mid \mathcal{F}_{i-1}] \le 1.
\end{align*} 

We now analyze the remaining case, i.e., when there are no defects in $[iK-K/2,iK+K/2)$ after we stabilize $\eta'_i(m,d)$ using the procedure described in Section~\ref{section_description}. Thus, we need only to analyze the case where $d=0$.

We will use from now on a slight abuse of notation and denote by $L_i(j), S_i(j)$ and $\ind_{G^i_{j}}$ the values of the quantities $L_i,S_i$ and $\ind_{G^i}$ after $j$ attempted emissions in block $i$.

For each particle added at $iK+a$, at least one attempted emission will be made. The only other way an attempted emission may be triggered is when a particle reaches the vacant hole at $iK$. In other words, for each $m$ there is at least one corresponding $j$. Therefore,
\begin{align*}\sum_{m=0}^{\infty} e^{48 S_i(m,d)}\ind_{\{L_i(m,d)=\ell\}}\ind_{G^i_{m,d}} \le\sum_{j=0}^{\infty} e^{48 S_i(j)}\ind_{\{L_i(j)=\ell\}}\ind_{G^i_{j}},
\end{align*}
and it is sufficient to show that
\begin{align} \label{expecwithj}
	\sup_{\ell \ge 0}\expec \Bigg[\sum_{j=0}^{\infty} e^{48 S_i(j)}\ind_{\{L_i(j)=\ell\}}\ind_{G^i_{j}} \mid \mathcal{F}_{i-1}\Bigg] \le e^4.
\end{align}

For simplicity, as $i$ is fixed, we will write $S$, $R$, $L$ and $G$ instead of $S_i$, $R_i$, $L_i$ and $G^i$. We will denote by $\mathcal{G}_j$ the $\sigma$-field generated by the information revealed up to the $j$-th attempted emission plus the information contained in $\mathcal{F}_i$.

\begin{definition}
	The process $(H(j))_{j\in \mathbb{N}}$, taking values in $\{0, 1, \dots, a\}$, is defined as $H(j)=v$ if the hole is at position $iK+v$ after the $j$-th attempted emission. 
\end{definition}

It is immediate that $H(j)$ is $\mathcal{G}_j$-measurable. For brevity we will define $\tilde{\pr}[\cdot]:= \pr[\cdot \mid \mathcal{F}_{i-1}]$ and $\tilde{\expec}[\cdot]:= \expec[\cdot \mid \mathcal{F}_{i-1}]$. To prove Proposition~\ref{oneblockestimate} we will make use of the following lemmas.

\begin{lemma} \label{lemma_positionhole}
	If $K$ is big enough, then, for every $j\ge 1$, the following inequalities hold:
	\begin{equation} \label{lemma_badhole}
		\tilde{\pr}[H(j)> a/2 \mid \mathcal{G}_{j-1}]\ind_{\{H(j-1)\in [0,a/2]\cup \{a\}\}} \le e^{-150},
	\end{equation}
	\begin{equation} \label{lemma_holemiddle}
		\tilde{\pr}(a/2<H(j)<a\mid \mathcal{G}_{j-1})<e^{-150}.
	\end{equation}
\end{lemma}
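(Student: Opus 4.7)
The plan is to view $H(\cdot)$'s evolution within a single attempted emission as a Markov chain on $\{0,1,\ldots,a\}$, absorbed either at $a$ (failure) or at a cemetery state (emission). First I would decompose the attempt into \emph{micro-steps}, each occurring when the hot particle sits at the hole: with probability $\lambda/(1+\lambda)$ the particle sleeps and the hole moves right by one; otherwise the particle leaves the hole and performs a random-walk excursion which either returns to the hole (moving the hole to the leftmost point of the excursion) or is emitted to an adjacent block. The elementary gambler's-ruin estimates of Section~\ref{section_markov} give that each excursion ends in emission with probability only $O(1/K)$, and that the leftmost point of a returning left excursion from hole $h$ is $h-M$ with the heavy-tailed law $\pr(M\ge k)=1/k$, truncated at $k=h$.

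To prove \eqref{lemma_badhole} in the regime $H(j-1)\in[0,a/2]$ I would choose a small $\beta>0$ with $\sum_{k\ge 1}e^{-\beta k}/(k(k+1))<1$, so that $V(h)=e^{\beta h}$ is a supermartingale for the hole chain whenever $h$ lies above a $\lambda$-dependent threshold $h^{\ast}$. Optional stopping at the first hitting time of $\{h>a/2\}$, compared against a return to $h^{\ast}$, yields a per-upcrossing bound $e^{-\beta(a/2-h^{\ast})}$; multiplied by the $O(K)$ expected number of upcrossings (controlled by the $O(1/K)$ per-step emission rate), this is still below $e^{-150}$ once $a$ is large. For the boundary case $H(j-1)=a$ the procedure enters the frozen branch: the hot particle starts at $iK$ and walks freely until emission, the hole resetting to $0$ iff the walk reaches $iK+a$ before being emitted. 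A direct gambler's-ruin computation on $[-(K-a),K]$ then gives $\pr(H(j)=a\mid H(j-1)=a)=a/K=1/a$, which is $\le e^{-150}$ as soon as $a\ge e^{150}$.

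For \eqref{lemma_holemiddle} the subcase $H(j-1)\in[0,a/2]$ follows directly from \eqref{lemma_badhole}, and $H(j-1)=a$ is trivial because $H(j)\in\{0,a\}$ in the frozen branch. The remaining subcase $H(j-1)\in(a/2,a)$ uses the same drift picture: at each micro-step the chain either fails upward, exits downward via a left excursion with $M\ge h-a/2$ (which has probability $\ge 1/(h-a/2)$ per excursion), or is absorbed by an emission with probability $O(1/K)$. A short Lyapunov computation shows the chain exits $(a/2,a)$ in $O(a)$ expected micro-steps, so the probability that an emission catches the hole while still inside $(a/2,a)$ is at most $O(a/K)=O(1/a)\le e^{-150}$ for $a$ large.

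The main obstacle is the heavy-tailed left-jump distribution combined with the subtle coupling between the hot-particle walk, the emission event, and the hole position: the hole chain is only approximately Markovian with respect to $\mathcal{G}_{j-1}$, and its excursion returns are only approximately independent of the emission. Packaging the excursion-tail and gambler's-ruin estimates into a clean Markov-chain description, as the authors do in Section~\ref{section_markov}, is the technical crux; once that is done, the three cases above reduce to the short calculations sketched.
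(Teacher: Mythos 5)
Your overall strategy (micro-step decomposition of an attempted emission, the excursion-minimum law $\tfrac{1}{k(k+1)}$, the $O(1/K)$ emission rate per step, and a gambler's-ruin computation for the frozen case $H(j-1)=a$) uses the same ingredients as Section~\ref{section_markov}; where you differ is in how the downward drift is exploited: you propose an exponential Lyapunov function $e^{\beta h}$ with an upcrossing count, whereas the paper works with the linear drift $\ex[\tilde{Y}_{a/3}]\le -40$ (Lemma~\ref{lemma_expecY}), a Hoeffding bound (Lemma~\ref{sum_Y}) and a control on the number of steps per attempt (Lemma~\ref{lemma_steps}). Your route could work, but as written it has a genuine error at its key step. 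The condition you impose on $\beta$, namely $\sum_{k\ge1}e^{-\beta k}/(k(k+1))<1$, holds for \emph{every} $\beta>0$ and does not make $e^{\beta h}$ a supermartingale: the per-micro-step increment is $+1$ with probability $\lambda/(1+\lambda)$ (sleep at the hole), so the exponential moment is at least $\tfrac{\lambda}{1+\lambda}e^{\beta}$, which exceeds $1$ for fixed $\beta$ and large $\lambda$ no matter how large the threshold $h^{\ast}$ is, since the remaining contributions are nonnegative. The negative drift comes solely from the logarithmically heavy left tail, whose total mass is only $\tfrac{1}{2(1+\lambda)}$; to beat the sleep drift you must choose $\beta$ depending on $\lambda$ (roughly $\beta\lesssim e^{-c\lambda}$), which forces $h^{\ast}$, and hence $a$, to be exponentially large in $\lambda$ --- exactly the reason the paper takes $a\ge 6\exp(120(\lambda+1))$. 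Since the theorem is precisely about arbitrary $\lambda$, a $\lambda$-free choice of $\beta$ is not a cosmetic omission; the supermartingale claim as stated is false, though the argument is repairable once $\beta=\beta(\lambda)$ is chosen correctly and the emission/no-emission conditioning is absorbed into the jump law (as the paper does via the $\delta$-correction in $\tilde{Y}_v$).

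A second gap is in the subcase $H(j-1)\in(a/2,a)$ of \eqref{lemma_holemiddle}: bounding by $O(a)$ the expected number of micro-steps for the hole to exit $(a/2,a)$ only controls the \emph{first} sojourn there. After exiting downward the hole can climb back up (one unit per sleep step, with probability $\lambda/(1+\lambda)$ each) and the emission may catch it inside $(a/2,a)$ during a later visit, which is part of the event you must exclude. You need to add a re-entry bound --- e.g.\ invoke your part-(i) upcrossing estimate from level $a/2$ after the first downward exit, or argue as the paper does: with probability at least $1-\tfrac{1}{2a}$ no emission occurs in the first $a/2$ steps, during which the hole descends to $[0,a/3]$ up to probability $e^{-a}$ (Lemma~\ref{sum_Y}), and then the \eqref{lemma_badhole} argument applies. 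This is fixable with tools you already have, but as written the step does not bound the stated event. Two minor inaccuracies in the frozen branch: the hot particle may start at $iK+a$ rather than $iK$, and when the receiving block has defects the emission boundary is closer to block $i$, so the correct estimate is $O(1/a)$ (the paper uses $2/a$) rather than exactly $a/K$; this does not affect the conclusion.
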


\begin{remark} \label{remark_lemma}
	The previous lemma is trivially true for $j=0$ if the initial configuration has $H(0)=0$.
\end{remark}

Lemma~\ref{lemma_positionhole} says that, if $K$ is large enough, we can make the hole process to have a drift to the left.

\begin{lemma} \label{lemma_emission}
	For every $j\ge 0$, if $K$ is big enough then the following bound holds almost surely:
	\[\tilde{\pr}(L(j+2)>L(j) \mid\mathcal{G}_{j})\ge \frac{1}{5}.\]
\end{lemma}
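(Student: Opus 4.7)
The plan is to analyze the random-walk dynamics of the hot particle across the $(j+1)$-th and $(j+2)$-th attempted emissions in block~$i$, and lower-bound the probability that at least one of them ends in a left emission. First I would observe that, by properties~\ref{P2},~\ref{P5}, and~\ref{P6}, a sleep instruction applied to the walker cannot fire except when the walker is at the hole of block~$i$: at every other site along its trajectory a carpet particle (possibly together with the frozen free particle) makes $\eta \ge 2$ and the sleep instruction is inert. Thus the walker performs an essentially symmetric simple random walk, with the only non-trivial interaction occurring when it visits the hole of block~$i$. I would then split into two cases according to whether $H(j)=a$ (\emph{frozen}) or $H(j)<a$ (\emph{non-frozen}).

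In the non-frozen case the hot particle starts at $iK$ or $iK+a$ by~\ref{P7}, and the attempt ends in a left emission, a right emission, or a failure. I would compare the walker's trajectory with a simple random walk on $\mathbb{Z}$ absorbed at $(i-1)K+a$ and $(i+1)K$; the gambler's-ruin probability of absorption at the left barrier from $iK$ or $iK+a$ is at least $(K-a)/(2K-a) \ge 1/3$ for $K$ large. The hole dynamics shift the walker's effective position by at most $a$ before failure (one unit per sleep at the hole, of which at most $a$ can occur before the hole reaches $iK+a$), and since $K = a^2 \gg a$ this perturbation is negligible. Using the excursion-tail bounds of Section~\ref{section_markov} to control the leftward drift of the hole induced by excursion-returns, one concludes that the probability of a left emission in a single attempt is at least some constant $c$, say $c \ge 1/4$. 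In the frozen case the hot particle at $iK$ performs a pure simple random walk on $\mathbb{Z}$ (no sleeps can fire in $[iK,iK+a]$ since $\eta \ge 2$ everywhere there), and by gambler's ruin between $iK$ and $iK+a$ either reaches $(i-1)K+a$ first (left emission, probability $a/K$) or reaches $iK+a$ first, triggering a hole reset (probability $(K-a)/K$) that leaves the configuration non-frozen with the hole at $iK$ and a free thawed particle at $iK$. In the latter event, attempt $j+2$ falls under the non-frozen case and again has probability at least $c$ of left emission. Combining,
\[
\tilde{\pr}\bigl(L(j+2)>L(j) \mid \mathcal{G}_j\bigr) \ge \min\Bigl(c,\ \tfrac{a}{K} + \tfrac{K-a}{K}\cdot c\Bigr) \ge \tfrac{1}{5}
\]
for $K$ large enough.

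The main obstacle is the careful bookkeeping of the walker--hole interactions in the non-frozen case. Each time the walker visits the hole, either a sleep occurs (shifting the hole and the walker identity one step to the right) or the walker begins an excursion that, if it returns to the hole, moves the hole to the leftmost site visited during the excursion. Justifying the comparison with a pure simple random walk on $\mathbb{Z}$ requires quantifying this drift through the excursion-tail estimates of Section~\ref{section_markov} and showing that with constant probability the walker reaches the left emission point before the hole reaches $iK+a$.
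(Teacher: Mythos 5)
Your argument breaks at the central step of the non-frozen case, namely the claim that a \emph{single} attempted emission produces a left emission with probability at least $c\ge 1/4$. The bound in the lemma must hold almost surely, i.e.\ for every realization of $\mathcal{G}_j$, including histories in which the hole of block $i$ sits at $iK+a-1$ at the start of attempt $j+1$ (a reachable configuration, merely an unlikely one by Lemma~\ref{lemma_positionhole}). In that situation the hot particle, started at $iK$ or $iK+a$ by~\ref{P7}, reaches the hole before any emission with probability $1-O(1/a)$, and each visit to the hole ends the attempt in a \emph{failure} with probability $\lambda/(1+\lambda)$, since a single sleep pushes the hole to $iK+a$. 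For large $\lambda$ the attempt therefore fails with probability arbitrarily close to $1$, and the single-attempt left-emission probability drops far below $1/5$; no comparison with a simple random walk can rescue it. Your justification also misstates the hole dynamics: the number of sleeps in one attempt is not bounded by $a$, because excursion returns move the hole back to the left, and in any case the issue is not the size of the positional shift but the probability of failure, which is not uniformly small in the conditioning. The missing idea --- and the heart of the paper's one-line proof --- is that a failure at attempt $j+1$ leaves a frozen particle at $iK+a$, and by the frozen-case rule the next attempted emission is toppled until it is emitted (no sleep can fire, since by~\ref{P2} and~\ref{P9} every site visited before emission carries another particle), so among any two consecutive attempts at least one surely ends in a successful emission, and only the \emph{direction} of that emission needs a gambler's-ruin bound. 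You invoke the second attempt only when attempt $j+1$ starts frozen, which is precisely the case where it is not needed.

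Two further inaccuracies, less damaging: the frozen-case attempt does not terminate when the walker reaches $iK+a$, nor when the hole is reset to $iK$ --- the same hot particle continues to be toppled until it is emitted, still within attempt $j+1$, so your decomposition ``reset, then attempt $j+2$'' misreads the procedure; and the ruin barriers $(i-1)K+a$ and $(i+1)K$ are only correct when blocks $i\pm 1$ contain no defects. With defects, the right target can be as close as $iK+K/2$ and the left target as far as $iK-3K/2$, so the uniform lower bound on the probability that a given emission goes left is about $\frac{K/2-a}{2K}\approx\frac14$ (still above $\frac15$, which is what the paper uses), not $\frac{K-a}{2K-a}\ge\frac13$.
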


For every integer $\ell \ge 0$, denote by $\tau_{\ell}$ the first $j$ for which $L(j)=\ell$, with $\tau_{-1}:=-1$. We have that $\tau_{\ell+1}> \tau_{\ell}$, because every attempted emission will result in at most one particle emitted to the left. We can rewrite~\eqref{expecwithj} to obtain
\begin{align} \expec &\left[ \sum_{j=\tau_{\ell-1}+1}^{\infty} e^{48 S(j)} \ind_{\{L(j)=\ell\}} \ind_{G_j} \mid \mathcal{F}_{i-1}\right] \nonumber \\&
	\le \sum_{j=\tau_{\ell-1}+1}^{\infty} \tilde{\expec}[\ind_{\{L(j)=\ell\}}\ind_{\{S(j)=0\}}+e^{48}\ind_{\{L(j)=\ell\}}\ind_{G_j}\ind_{\{S(j)=1\}}] \nonumber\\&
	\le \tilde{\expec}[\tau_{\ell+1}-\tau_{\ell}]+\sum_{k=1}^{\infty} \tilde{\expec}[e^{48}\ind_{\{L(\tau_{\ell-1}+k)=\ell\}}\ind_{G_{\tau_{\ell-1}+k}}\ind_{\{S(\tau_{\ell-1}+k)=1\}}]. \label{sum_complete}
\end{align}

Using Lemma~{\ref{lemma_emission}}, we get
\begin{align} \label{expec_stopping}
	\tilde{\expec}[\tau_{\ell+1}-\tau_{\ell}] =\sum_{r=0}^{\infty} \tilde{\pr}(\tau_{\ell+1}-\tau_\ell>r)\le\sum_{r=0}^{\infty}\left(\frac{4}{5}\right)^{\lfloor r/2\rfloor}<10.
\end{align}

We split the sum in~\eqref{sum_complete} at $k_0:=250$. By Lemma~\ref{lemma_emission}, 
\begin{align} \label{sum_big}
	\sum_{k=k_0}^{\infty}& \tilde{\expec}[e^{48}\ind_{\{L(\tau_{\ell-1}+k)=\ell\}}\ind_{G_{\tau_{\ell-1}+k}}\ind_{\{S(\tau_{\ell-1}+k)=1\}}] \nonumber
	\\& \le \sum_{k=k_0}^{\infty} e^{48}\tilde{\pr}(L(\tau_{\ell-1}+k)=\ell) \le \sum_{k=k_0}^{+\infty}e^{48}\left(\frac{4}{5}\right)^{\lfloor k/2\rfloor}<1.
\end{align}

We will now prove that, for every $\ell \ge 0$ and $1 \le k \le k_0$,
\begin{align} \label{bound_expec}
	\tilde{\expec}[\ind_{\{L(\tau_{\ell-1}+k)=\ell\}}\ind_{G_{\tau_{\ell-1}+k}}\ind_{\{S(\tau_{\ell-1}+k)=1\}}]<k_0^{-1}e^{-48}.
\end{align}

Let us analyze first the case $\ell \ge 1$. In this case, obviously $\ind_{G_{\tau_{\ell-1}}}=1$. First, we consider $k=1$. Note that if $S(\tau_{\ell-1}+1)=1$ and $L(\tau_{\ell-1}+1)=\ell$, then the $(\tau_{\ell-1}+1)$-th attempted emission is successful and $H(\tau_{\ell-1})=a$. Using~\eqref{lemma_badhole}, after conditioning on $\mathcal{G}_{\tau{\ell-1}}$,
\begin{align*}
	\tilde{\expec}&[\ind_{\{L(\tau_{\ell-1}+1)=\ell\}}\ind_{\{S(\tau_{\ell-1}+1)=1\}}]
	\\ &	\le \tilde{\expec}[\ind_{\{L(\tau_{\ell-1}+1)=\ell\}}\ind_{\{H(\tau_{\ell-1}+1)=a\}}\ind_{\{H(\tau_{\ell-1})=a\}}] \\& <e^{-150}<k_0^{-1}e^{-48}.
\end{align*}
Still assuming $\ell\ge 1$, for $2 \le k\le k_0$, we have that
\begin{eqnarray*}
	\tilde{\mathbb{P}}(S(\tau_{\ell-1}+k)=0) & \ge & \tilde{\mathbb{E}}[\mathbb{E}[\mathbf{1}_{\{H(\tau_{\ell-1}+k) \in[0,a/2]\}}]\mid \mathcal{G}_{\tau_{\ell-1}+k-1}] 
	\\ & \ge & \tilde{\mathbb{E}}[(1-e^{-150})\mathbf{1}_{\{H(\tau_{\ell-1}+k-1) \in[0,a/2]\}}]
	\\ &=&(1-e^{-150})\tilde{\mathbb{E}}[\mathbb{E}[\mathbf{1}_{\{H(\tau_{\ell-1}+k-1) \in[0,a/2]\}}\mid \mathcal{G}_{\tau_{\ell-1}+k-2}]] \\ &\ge& (1-e^{-150})^2 > 1-\frac{1}{k_0e^{48}},
\end{eqnarray*}

\noindent where in the second line we used (\ref{lemma_positionhole}) and in the last line we used (\ref{lemma_holemiddle}).

This concludes the case $\ell \ge 1$. Now let us analyze the case $\ell = 0$. We remember that we defined $\tau_{-1}=-1$. When $k=1$, $\ind_{G_0}=0$ trivially and~\eqref{bound_expec} is valid. For $\ell=0$ and $k=2$, if the first attempted emission was not successful, then $\ind_{G_1}=0$. The only way the first attempt is successful, $S(1)=1$ and $L(1)=0$ occurs if $S(0)=1$. By~\eqref{lemma_badhole}, the probability of $S(0)=1$ and $S(1)=1$ is less than $e^{-150}$. Either way, we conclude that~\eqref{bound_expec} holds when $k=1$.

For $\ell=0$ and $2 < k \le k_0$ we use~\eqref{lemma_badhole} and~\eqref{lemma_holemiddle} in the same way as before to conclude that $\tilde{\pr}(S(k)=0)>1-\frac{1}{k_0e^{48}}$.
Combining~\eqref{expec_stopping}, \eqref{sum_big} and~\eqref{bound_expec}, we conclude~\eqref{expecwithj}.

The proof of $(ii)$ for $\ell \ge 1$ is almost identical to the proof of $(i)$. In the case $\ell=0$, we cannot use Lemma~\ref{lemma_positionhole} directly. But, since by hypothesis the initial configuration has holes only at $iK$, we can use Remark~\ref{remark_lemma} instead of Lemma~\ref{lemma_positionhole} to show that, for $1 \le k \le k_0$, 
$\tilde{\expec}[\ind_{\{L(\tau_{\ell-1}+k)=\ell\}}\ind_{\{S(\tau_{\ell-1}+k)=1\}}]<\frac{1}{k_0e^{48}}$ and proceed as in $(i)$.

\section{Estimating the hole drift} \label{section_markov}

This section is devoted to prove Lemmas~\ref{lemma_positionhole} and~\ref{lemma_emission}.

The movement of the hole has a drift to the right, at least if it is in the interval $[iK, iK+a/3]$, if $\lambda$ is large. We then choose $a$ large enough such that the hole has a drift to the left as soon as it is at the right of $iK+a/3$. Then, as long as there is no emission, the hole will be ``trapped'' between those two drifts, and would take an exponentially large number of trials to reach $iK+a$. On the other hand, the event of no emission occurring after an exponential number of trials is unlikely, because the probability of an emission is roughly $1/(\lambda a^2)$, and this is of order $1/a^2$ if $a$ is much larger than $\lambda$.

The probability of occurrence of an emission to the left (or to the right) is bounded above by

\[\delta:=\left(\frac{1}{2}\right)\left(\frac{1}{1+\lambda}\right)\left(\frac{1}{K-a}\right).\]
For each $v \in \mathbb{N}$, $v \le a$, define
\[\tilde{Y}_v=\begin{cases}
	+1, & \text{with probability } \frac{\lambda}{1+\lambda};
	\\ 0, & \text{with probability } \frac{1/2}{1+\lambda}+\delta;
	\\ -k, & \text{with probability } \frac{1/2}{1+\lambda}\frac{1}{k(k+1)} \text{ for } k=1,2,\dots, v-1;
	\\ -v & \text{with probability } \frac{1/2}{1+\lambda}-\delta-\sum_{k=1}^{v-1} \frac{1/2}{1+\lambda}\frac{1}{k(k+1)}.
\end{cases}\]

Those random variables dominate stochastically the change of the position of the hole, when it is at $iK+v$, conditioned to the event that there are no defects inside the block.

\begin{lemma} \label{lemma_expecY}
	If $a$ is large enough and $v \ge a/3$, $\expec[\tilde{Y}_v]\le-40$.
\end{lemma}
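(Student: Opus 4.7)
The plan is to compute $\ex[\tilde{Y}_v]$ exactly, using the telescoping identity $\sum_{k=1}^{v-1}\frac{1}{k(k+1)} = 1 - \frac{1}{v}$, then show that the leading behavior is $-\frac{H_v}{2(1+\lambda)}$, which diverges to $-\infty$ as $v\to\infty$. Since $v \ge a/3$, taking $a$ large makes this term dominate the constant contribution $+\frac{\lambda}{1+\lambda}$ and the small correction coming from $\delta$.

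Concretely, first I write
\begin{align*}
\ex[\tilde{Y}_v] = \frac{\lambda}{1+\lambda} - \frac{1/2}{1+\lambda}\sum_{k=1}^{v-1}\frac{k}{k(k+1)} - v\left(\frac{1/2}{1+\lambda} - \delta - \frac{1/2}{1+\lambda}\sum_{k=1}^{v-1}\frac{1}{k(k+1)}\right).
\end{align*}
The telescoping sum reduces the parenthesis to $\frac{1/2}{v(1+\lambda)} - \delta$, so the last term contributes $-\frac{1/2}{1+\lambda} + v\delta$. The middle sum is the harmonic tail $\sum_{k=1}^{v-1}\frac{1}{k+1} = H_v - 1$. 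Collecting,
\begin{equation*}
\ex[\tilde{Y}_v] = \frac{\lambda}{1+\lambda} - \frac{H_v}{2(1+\lambda)} + v\delta.
\end{equation*}

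Next, I bound the remainder. Since $K = a^2$, we have $\delta = \frac{1}{a^2(1+\lambda)}$, and for any $v \le a$ the correction satisfies $v\delta \le \frac{1}{a(1+\lambda)} \le 1$ once $a \ge 1$. For $v \ge a/3$, the standard bound $H_v \ge \log v \ge \log(a/3)$ gives
\begin{equation*}
\ex[\tilde{Y}_v] \le \frac{\lambda}{1+\lambda} - \frac{\log(a/3)}{2(1+\lambda)} + 1.
\end{equation*}
The right-hand side is a decreasing function of $a$ that tends to $-\infty$, so I choose $a$ large enough that the bound is at most $-40$, which is the assertion of the lemma.

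The computation is essentially routine; the only care needed is the telescoping identity for the $-v$-probability, which ensures cancellation of the $-v\cdot\frac{1/2}{1+\lambda}$ term and leaves only the harmonic-growth contribution plus an $O(1/a)$ correction from $\delta$. There is no real obstacle, provided one tracks the constants. The main conceptual point is that the $-\log v$ drift produced by the heavy (Cauchy-like) left tail $1/(k(k+1))$ is the mechanism behind the lemma, and the small perturbation $\delta$ was chosen precisely so that $v\delta$ is negligible against this logarithm throughout the range $v \le a$.
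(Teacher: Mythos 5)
Your proof is correct and follows essentially the same route as the paper: both arguments isolate the logarithmically divergent drift $-\tfrac{1}{2(1+\lambda)}\sum_k \tfrac{1}{k+1}$ coming from the $1/(k(k+1))$ left tail and note that the remaining contributions are $O(1)$, so taking $a \gtrsim e^{C(1+\lambda)}$ suffices. The paper simply upper-bounds the positive and $-v$ terms rather than computing the expectation exactly, but the mechanism and conclusion are identical.
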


\begin{proof}
	Note that
	\begin{align*}
		\expec[\tilde{Y}_v]
		&\le 1 - (2(\lambda+1))^{-1}\sum_{k=1}^{a/3}\frac{1}{k(k+1)}\cdot k \\
		&\le 1 - (\log(a/3)-\log 2)(2(\lambda+1))^{-1}.
	\end{align*}
	The lemma follows if we take $a$ greater than $6\exp(120(\lambda+1))$.
\end{proof}

Thus, the hole has a drift to the left, when $v \ge a/3$, at least when no emission is made. This is because the excursion of a simple random walk has a heavy tail, and, if we take $a$ large enough, at least one toppling of the hot particle will result in an excursion instead of the particle going to sleep.

\begin{lemma} \label{sum_Y}
	Let $(\tilde{Y}_{a/3}(i))_{i \in \mathbb{N}}$ be a sequence of i.i.d.\ random variables with common distribution $\tilde{Y}_{a/3}$. Then there is a constant $\alpha > 0$ such that
	\begin{align*}\pr \left(\sum_{i=1}^{a/6}\tilde{Y}_{a/3}(i) \ge -a/6 \right) \le e^{-\alpha a}
		\quad
		\text{ and }
		\quad
		\pr \left(\sum_{i=1}^{a/2}\tilde{Y}_{a/3}(i) \ge -2a/3 \right) \le e^{-\alpha a}.
	\end{align*}
\end{lemma}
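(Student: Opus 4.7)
The plan is an exponential Chernoff--Markov bound, applied in parallel to each of the two inequalities. Write $S_n := \sum_{i=1}^n \tilde Y_{a/3}(i)$. By Markov applied to $e^{tS_n}$, for any $t > 0$,
\[
\pr(S_n \ge c) \le e^{-tc}\,\phi(t)^n, \qquad \phi(t) := \ex[e^{t\tilde Y_{a/3}}].
\]
The two inequalities correspond to $(n,c) = (a/6,-a/6)$ and $(n,c) = (a/2,-2a/3)$ respectively.

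The first step is to bound $\phi(t)$. Using the inequality $e^{-tk} \le 1 - tk + t^2 k^2/2$ applied to each negative atom and Taylor-expanding the $e^t$ factor, one obtains a bound of the form
\[
\phi(t) \le \exp\!\bigl(t\mu + \tfrac{t^2}{2}\,\ex[\tilde Y_{a/3}^2]\bigr)
\]
for $t > 0$ sufficiently small. Lemma~\ref{lemma_expecY} gives $\mu \le -40$, and a direct computation based on $\sum_{k=1}^{v-1} k^2/(k(k+1)) \le v$ bounds the second moment by $v/(1+\lambda) + O(1)$ with $v = a/3$. Substituting these into the Chernoff bound yields a quadratic function of $t$, whose minimum (attained at some explicit $t^* = \Theta(1/a)$) provides the final estimate. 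The second inequality then follows by repeating this argument with the new values of $(n,c)$.

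The main obstacle is obtaining an exponent as negative as $-a$. The naive Chernoff computation above only gives an exponent of order $-(|\mu|-1)^2(1+\lambda)$, which is a constant in $a$. To reach $e^{-a}$ one has to exploit more: either invoke Lemma~\ref{lemma_expecY} with the sharper drift $|\mu| \sim \log(a)/(1+\lambda)$ (which the statement there permits once $a$ is taken large enough), so that the optimal exponent grows with $a$; or combine a Bernstein-type estimate on a truncation $\tilde Y_v \vee (-M)$ with a union bound handling the rare large jumps of the $1/(k(k+1))$ tail. Verifying that these refinements are strong enough to produce the stated exponent, given that the variance of $\tilde Y_v$ is itself of order $v$, is the technical heart of the argument.
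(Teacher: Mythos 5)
Your proposal is, at bottom, the same exponential Markov argument as the paper's (the paper invokes Hoeffding, i.e.\ Chernoff with $\log\phi(t)\le t\mu+t^2(a/3+1)^2/8$), and your diagnosis of the obstacle is accurate: since $\ex[\tilde Y_{a/3}^2]\asymp a/(1+\lambda)$ and the number of summands is also of order $a$, the optimized quadratic exponent $-(c-n\mu)^2/(2n\sigma^2)$ is $\Theta(1+\lambda)$, bounded in $a$. (The paper's own proof fares even worse: the Hoeffding denominator $k(a/3+1)^2\asymp a^3$ overwhelms the numerator $\asymp a^2$, so the exponent it produces is $O((\log a)^2/a)$ and the displayed bound is vacuous for large $a$; as written, that proof does not establish the lemma.) However, neither of the two refinements you leave unverified can close the gap, because the constant $1$ in the exponent is unattainable: each $\tilde Y_{a/3}(i)$ is nonnegative with probability $\tfrac{\lambda+1/2}{1+\lambda}+\delta\ge\tfrac12$, so $\pr\bigl(\sum_{i=1}^{a/6}\tilde Y_{a/3}(i)\ge -a/6\bigr)\ge 2^{-a/6}=e^{-(\ln 2/6)a}\gg e^{-a}$, and likewise the second probability is at least $2^{-a/2}$. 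The sharper drift $|\mu|\asymp\log a/(1+\lambda)$ only lifts the quadratic exponent to $\Theta((\log a)^2)$, and truncating at level $M$ gives a Bernstein exponent at best $\asymp n|\mu_M|/M=o(a)$ unless $M=O(1)$, which destroys the drift. So the statement as written is false, and no completion of your outline exists.

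What is true, and what the union bounds in Section~6 actually require (there one only needs the probability to be $\le a^{-4}$, say), is $e^{-c(\lambda)a}$ for some $c(\lambda)>0$. The way to get a rate linear in $a$ is precisely \emph{not} to expand $\phi$ to second order: fix $t>0$ small depending only on $\lambda$ and use the exact identity $\phi(t)=1+\tfrac{\lambda}{1+\lambda}(e^t-1)-\tfrac{1}{2(1+\lambda)}\sum_{k}\tfrac{1-e^{-tk}}{k(k+1)}-(1-e^{-tv})p_v$, whose negative term is at least $\tfrac{t}{2}\log(1/t)\cdot\tfrac{1}{2(1+\lambda)}$ once $a/3\ge 1/t$ (guaranteed by the choice $a\ge 6e^{120(1+\lambda)}$). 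Choosing $\log(1/t)\ge C(1+\lambda)$ makes $e^{4t/3}\phi(t)\le e^{-t}$, whence both probabilities are at most $e^{-ta/6}$ with $t=t(\lambda)\asymp e^{-C(1+\lambda)}$. In short: you correctly sensed that the second-moment Chernoff bound cannot deliver the claim, but the right conclusion is that the lemma must be restated with a $\lambda$-dependent rate and proved by a fixed-$t$ moment generating function computation exploiting the full $1/(k(k+1))$ tail, not that further refinements of the Gaussian-type bound will succeed.
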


\begin{proof}
	Let $(Y^i)_{i=1}^{+\infty}$ be a sequence of bounded i.i.d.\ random variables with expectation $\nu<0$. Fix $b>0$. By Chernoff's concentration inequality, for every $\gamma>0$ such that $\gamma b$ is an integer with $\nu < -\gamma^{-1}$, there is a constant $\alpha '>0$ such that
	\[\mathbb{P}\left(\sum_{i=0}^{\gamma b}Y^i>-b\right)\le e^{-\alpha '\gamma b}.\]	
	By Lemma~\ref{lemma_expecY}, $\mathbb{E}[\tilde{Y}_{a/3}]<-40$. We can then take $b=a/6$ and $\gamma=1$ to obtain the first inequality of the lemma, and $b=-2a/3$, $\gamma=3/4$ to obtain the second one.	
\end{proof}

We call a \emph{step} of the toppling procedure when the hot particle starts in the hole and either: $(i)$ falls asleep; $(ii)$ is emitted; or $(iii)$ leaves the hole and returns to it. Let $T_j$ be the number of steps taken by the procedure between the $(j-1)$-st and the $j$-th attempted emissions.

\begin{lemma} \label{lemma_steps}
	If $a$ is large enough, for all $v\in \{0,\dots, a\}$,
	$\pr(T_j>a^3 \medspace | \medspace H(j-1)=v)<a^{-1}$.
	Moreover, if $v < a/2$, then
	$\pr(T_j<a/2 \medspace | \medspace H(j-1)=v)<a^{-1}$.
\end{lemma}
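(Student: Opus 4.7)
Plan. Both bounds concern $T_j$, the number of excursions of the hot particle performed during the $j$-th attempted emission, which ends either by an emission (success) or by the hole reaching position $iK+a$ (failure). I would track the hole position process $H(0)=v,H(1),H(2),\dots$ on $\{0,\dots,a\}$; by the construction of the procedure, each of its increments is stochastically dominated by $\tilde Y_{H(k)}$ in the sense underlying Lemmas~\ref{lemma_expecY} and~\ref{sum_Y}.

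For part~1, I would split the $a^3$ steps into $a$ disjoint epochs of length $a^2$ and show that, conditionally on not having terminated earlier, the attempted emission terminates inside each epoch with probability bounded away from $0$, giving $\pr(T_j>a^3)\le e^{-\Omega(a)}\ll a^{-1}$ by the strong Markov property. Inside an epoch, the drift estimate $\ex[\tilde Y_v]\le -40$ for $v\ge a/3$ combined with Lemma~\ref{sum_Y} implies that whenever $H\ge a/3$, the process drops below $a/3$ within $a/2$ steps except on an event of probability $e^{-a}$. Hence, barring termination, $H$ spends at least a constant fraction of the $a^2$ steps in $[0,a/3]$; while $H$ is in that low range, a gambler's-ruin analysis of the hot particle's excursion (which must either return to the hole or reach block $i-1$ or $i+1$) shows that each step has emission probability at least $c/K$ for some constant $c=c(\lambda)>0$. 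Summing over the $\Omega(a^2)$ low-$H$ steps yields emission probability $\ge 1-e^{-\Omega(1)}$ inside each epoch.

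For part~2, observe that $\tilde Y_v$ takes positive values only at $+1$, so $H$ cannot grow by more than one per step; starting from $v<a/2$, the hole stays strictly below $a$ throughout the first $a/2$ steps and failure is impossible in that window. Hence $\{T_j<a/2\}$ forces an emission to occur in fewer than $a/2$ steps. The same gambler's-ruin bound applied in the opposite direction gives a per-step emission probability of at most $C/K=C/a^2$, and a union bound over $a/2$ steps yields $\pr(T_j<a/2\mid H(j-1)=v)\le Ca/(2K)< (2a)^{-1}$ for $a$ sufficiently large.

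The main obstacle will be making rigorous the per-step emission probability of order $\Theta(1/K)$, needed as a lower bound in part~1 and as an upper bound in part~2. The hot particle does not perform a genuine simple random walk: when it crosses into sleeping carpet (below the hole or above $iK+a$) it wakes further particles and the system becomes genuinely multi-particle. The Abelian property together with a coupling of the first-passage probabilities of the hot particle with those of a simple random walk on $\mathbb Z$ (comparable to how $\tilde Y_v$ is built from the $1/(k(k+1))$ tail of simple-random-walk excursions) should nevertheless justify the $\Theta(1/K)$ bound in both directions.
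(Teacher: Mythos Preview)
Your Part~2 is essentially the paper's argument: since the hole can move up by at most one per step, failure is impossible in the first $a/2$ steps, and a union bound on the per-step emission probability $O(1/K)$ finishes it. Just be careful that the constant really is at most $1$; this is where the factor $1/(1+\lambda)$ from the sleep instruction at the hole is used, since the emission probability per step is at most $\frac{1}{1+\lambda}\cdot\frac{1}{K-a}$.

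Your Part~1 is substantially more complicated than needed, and the detour through the hole drift has a gap. You want to show that $H$ spends a constant fraction of each epoch in $[0,a/3]$, but Lemma~\ref{sum_Y} only controls the \emph{duration} of excursions above $a/3$; it says nothing about how long the hole stays below $a/3$ before crossing back up, and for small $v$ the increments $\tilde Y_v$ have positive mean, so the hole could oscillate rapidly near $a/3$. Bounding the number of upcrossings would require additional work. The paper avoids all of this by observing that the $c/K$ lower bound on the per-step emission probability holds \emph{regardless} of the hole position: from any $v\in[0,a]$, the hot particle jumps with probability $\frac{1}{1+\lambda}$ and then, by gambler's ruin, reaches an emission target at distance $O(K)$ before returning to the hole with probability at least $1/(3K/2+a)$. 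Hence $\pr(T_j>a^3)\le\bigl(1-\tfrac{1}{(1+\lambda)(3K/2+a)}\bigr)^{a^3}<a^{-1}$ directly, with no need to track where the hole is.

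Finally, the ``main obstacle'' you flag is not an obstacle at all. Away from the hole, the hot particle always shares its site with a carpet (or frozen free) particle, so sleep instructions are ineffective and the hot particle performs a genuine simple random walk until it either returns to the hole or reaches an emission target. The gambler's-ruin bounds are therefore exact, with no coupling or Abelian-property argument required.
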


The exact value of the constants $1/3$ and $1/2$ is not important. We need the ``gap between'' $a/3$ and $a/2$ to be able to use the drift to the left of the hole and still have a good number of trials before the hole be able to reach $iK+a$. 

\begin{proof}[Proof of Lemma~\ref{lemma_steps}]
	First we note that if the hot particle is at the hole, the probability that the next instruction will be a jump instruction is $(\lambda+1)^{-1}$. Therefore, the probability of an emission is at least $((\lambda+1)(3K/2+a))^{-1}$. The first inequality comes then from the fact that, if $a:=\sqrt{K}$ is big enough, $(1-((\lambda+1)(3K/2+a))^{-1})^{a^3}<a^{-1}$.
	
	If $v < a/2$, then the first $a/2$ steps of an attempted emission cannot end with a failure. If $T_j< a/2$, then the $j$th attempted emission ended in an successful emission. The probability of occurrence of an emission is at most $((\lambda+1)(K-a))^{-1}$, then an union bound gives that the probability of an emission occurring in the first $a/2$ steps is at most $(a/2)((\lambda+1)K)^{-1}\le a^{-1}$ and the second inequality of the lemma follows.
\end{proof}	
\begin{proof}[Proof of Lemma~\ref{lemma_positionhole}]
	We begin by proving~\eqref{lemma_badhole}. It is sufficient to prove that, for every $v \in \{0,a/2\} \cup \{a\}$, $\pr(H(j)>a/2 \medspace| \medspace H(j-1)=v)< \frac{4}{a}<e^{-150}$. We split in three cases.
	
	First, we consider $v=a$. In this case, by~\ref{P9} every site in the block has a carpet or a frozen particle, and the hot particle is performing a simple random walk. If the hot particle visits every site in $[0,a]$ before being emitted, the hole is reset to position $0$. The probability of this occurring is at least $\frac{K/2-a}{K/2}=1-\frac{2}{a}.$

	Then, we look at the case $v \le a/3$. By Lemma~\ref{lemma_steps}, it is immediate that $\tilde{\pr}(H(j)>a/2 \text{ and } T_j\ge a^3 \medspace| \medspace H(j-1)=v)< \frac{1}{a}$. If $\{H(j)>a/2\} \cap \{T_j<a^3\}$ occurs, one of the first $a^3$ times the hole is at $a/3$ it moves to $a/2$ before returning to the left of $a/3$. This probability is bounded above by the probability that the sum of $a/6$ independent copies of $\tilde{Y}_{a/3}$ is at least $0$. Using Lemma~\ref{sum_Y}, $\tilde{\pr}(H(j)>a/2 \text{ and } T_j < a^3 \medspace|\medspace H(j-1)=v)< \frac{1}{a}$. The case follows by an union bound. 
	
	Finally, we consider $a/3<v \le a/2$. A failure cannot occur in the first $a/2-1$ steps. The probability that an emission occurs and $T_j<a/2$ is bounded above by $1/a$ by Lemma~\ref{lemma_steps}. If an emission does not occur in the first $a/2-1$ steps and the hole is always at the right of $a/3$, the movement of the hole is stochastically dominated by $\tilde{Y}_{a/3}$. Thus the probability of $T_j<a/2$ and the hole be always at the right of $a/3$ is bounded above by the probability that the sum of $a/6$ copies of $\tilde{Y}_{a/3}$ is at least $-a/6$, and this probability is at most $e^{-\alpha a}$ by Lemma~\ref{sum_Y}. Once the hole is at $a/3$ or to the left of $a/3$ we proceed as in the previous case. Hence, we get $\tilde{\pr}(H(j)>a/2 \text{ and } T_j < a/2\medspace|\medspace H(j-1)=v)< \frac{1}{a}$, and $\tilde{\pr}(H(j)>a/2 \text{ and } a/2\le T_j < a^3 \medspace| \medspace H(j-1)=v)< \frac{1}{a}$. By Lemma~\ref{lemma_steps}, $\tilde{\pr}(H(j)>a/2 \text{ and } T_j > a^3 \medspace|\medspace H(j-1)=v)< \frac{1}{a}$,
	and we conclude by an union bound.

	It remains to prove~\eqref{lemma_holemiddle}. Again, it is sufficient to condition on $H(j-1)$. If $H(j-1) \in [0,a/2] \cup \{a\}$, we can use~\eqref{lemma_badhole} to bound the probability of $H(j) \in (a/2,a)$. So let us suppose $H(j-1) \in (a/2,a)$. Since when a failure occurs we have $H(j)=a$, we only need to consider the case of an emission. The probability that an emission occurs in the first $a/2$ steps is bounded above by $\frac{1}{a}$ by the same argument used in the proof of Lemma~\ref{lemma_steps}. The probability that a successful emission does not occur and the hole is never in $[0,a/3]$ in the first $a/2$ steps is at most $e^{-\alpha a}$ by Lemma~\ref{sum_Y}. Once the hole reaches $[0,a/3]$ we can use Lemma~\ref{sum_Y} again as in the proof of~\eqref{lemma_badhole} to conclude that the probability the hole reaches $(a/2,a)$ is at most $e^{-\alpha a}$. An union bound finishes the proof.
\end{proof}

\begin{proof}[Proof of Lemma \ref{lemma_emission}]
	We notice that if an attempted emission ends in a failure, the next hot particle will be emitted. Thus, for each two attempted emissions, at least one will be successful. The probability that such an emission occurs to the left is at least $\frac{((i+1)K/2)-(iK+a)}{((i+1)K/2)-(iK-3K/2+1)}=\frac{1}{4}-\frac{1}{\sqrt{K}}\ge\frac{1}{5}$.
\end{proof}

\section*{Acknowledgment} The authors are grateful to a reviewer for thoughtful and constructive remarks which led to a considerable improvement in the presentation.

\section*{Funding information} B.~N.~B.~L.~ was partially supported by CNPq grant 315861/2023-1. L.~R.~ was partially supported by FAPESP grant 2023/13453-5. C.~T.~ was partially supported by CAPES grant 88887.595894/2020-00, FAPEMIG grant 15435 and CNPq grant 151432/2024-4.

\section*{Competing Interests}
There were no competing interests to declare during the preparation or publication process
of this article.

\bibliographystyle{abbrv}
\bibliography{refbib}

\end{document}